\title{Sobolev versus homogeneous Sobolev extension}
\author{Pekka Koskela, Riddhi Mishra  and Zheng Zhu}
\address{Pekka Koskela\\
Department of Mathematics and Statistics\\,
University of Jyv\"askyl\"a, P.O. Box 35 (MaD),
FI-40014, Jyv\"askyl\"a, Finland}
\email{\tt pekka.j.koskela@jyu.fi}
\address{Riddhi Mishra\\
Department of Mathematics and Statistics\\
University of Jyv\"askyl\"a, P.O. Box 35 (MaD),
FI-40014, Jyv\"askyl\"a, Finland}
\email{\tt riddhi.r.mishra@jyu.fi}
\address{Zheng Zhu\\
School of Mathematical Science\\
Beihang University\\
        Beijing 100191\\
        P. R. China}
\email{\tt zhzhu@buaa.edu.cn}
\numberwithin{equation}{section}
\long\def\colred#1\endred{{\color{red}#1}}
\long\def\colgreen#1\endgreen{{\color{green}#1}}
\long\def\colmagenta#1\endmagenta{{\color{magenta}#1}}
\long\def\colblue#1\endblue{{\color{blue}#1}}
\long\def\colyellow#1\endyellow{{\color{yellow}#1}}
\newcommand{\abs}[1]{\lvert#1\rvert}
\theoremstyle{plain}
\newtheorem{thm}{Theorem}[section]
\newtheorem{lem}{Lemma}[section]
\newtheorem{defn}{Definition}[section]
\newtheorem{pro}{Proposition}[section]
\numberwithin{equation}{section}
\theoremstyle{remark}
\theoremstyle{definition}
\newtheorem*{question*}{Question}
\subjclass[2010]{46E35, 30L99}
\thanks{The first two authors have been supported by the Academy of Finland via Centre of Excellence in Randomness and Structures
Research (Project number 364210). The third author has been supported by the NSFC grant (No. 12301111) and “the Fundamental Research Funds for the Central Universities” in Beihang University.}
\newcounter{prob}
\def\rr{{\mathbb R}}
\def\rn{{{\rr}^n}}
\def\fz{\infty}
\def\dist{{\mathop\mathrm{\,dist\,}}}
\def\boz{{\Omega}}
\def\bint{{\ifinner\rlap{\bf\kern.25em--}
\int\else\rlap{\bf\kern.45em--}\int\fi}\ignorespaces}
\def\bbint{{\ifinner\rlap{\bf\kern.25em--}
\hspace{0.078cm}\int\else\rlap{\bf\kern.45em--}\int\fi}\ignorespaces}
\def\r{\right}
\def\lf{\left}
\def\XXint#1#2#3{{\setbox0=\hbox{$#1{#2#3}{\int}$ }
\vcenter{\hbox{$#2#3$ }}\kern-.58\wd0}}
\def\vint_#1{\mathchoice%
        {\mathop{\kern 0.2em\vrule width 0.6em height 0.69678ex depth -0.58065ex
                \kern -0.8em \intop}\nolimits_{\kern -0.4em#1}}%
        {\mathop{\kern 0.1em\vrule width 0.5em height 0.69678ex depth -0.60387ex
                \kern -0.6em \intop}\nolimits_{#1}}%
        {\mathop{\kern 0.1em\vrule width 0.5em height 0.69678ex
            depth -0.60387ex
                \kern -0.6em \intop}\nolimits_{#1}}%
        {\mathop{\kern 0.1em\vrule width 0.5em height 0.69678ex depth -0.60387ex
                \kern -0.6em \intop}\nolimits_{#1}}}
\def\vintslides_#1{\mathchoice%
        {\mathop{\kern 0.1em\vrule width 0.5em height 0.697ex depth -0.581ex
                \kern -0.6em \intop}\nolimits_{\kern -0.4em#1}}%
        {\mathop{\kern 0.1em\vrule width 0.3em height 0.697ex depth -0.604ex
                \kern -0.4em \intop}\nolimits_{#1}}%
        {\mathop{\kern 0.1em\vrule width 0.3em height 0.697ex depth -0.604ex
                \kern -0.4em \intop}\nolimits_{#1}}%
        {\mathop{\kern 0.1em\vrule width 0.3em height 0.697ex depth -0.604ex
                \kern -0.4em \intop}\nolimits_{#1}}}
\begin{document}

\maketitle

\begin{abstract}
In this paper, we study the relationship between Sobolev extension domains and homogeneous Sobolev extension domains. Precisely, we obtain the following results.
\begin{itemize}
 \item  Let $1\leq q\leq p\leq \infty$. Then a bounded $(L^{1, p}, L^{1, q})$-extension domain is also a $(W^{1, p}, W^{1, q})$-extension domain.
 
%\item A bounded domain $\boz\subset\rn$ is a Sobolev $(1, 1)$-extension domain if and only if it is a homogeneous Sobolev $(1, 1)$-extension domain.

  \item Let $1\leq q\leq  p<q^\star\leq \infty$ or $n< q \leq p\leq \infty$. Then a bounded domain is a $(W^{1, p}, W^{1, q})$-extension domain if and only if it is an $(L^{1, p}, L^{1, q})$-extension domain. %{\color{blue}(is also a $(L^{1, p}, L^{1, q})$-extension domain.)}
  
  \item  For $1\leq q<n$ and $q^\star<p\leq \fz$, there exists a bounded domain $\boz\subset\rn$ which is a $(W^{1, p}, W^{1, q})$-extension domain but not an $(L^{1, p}, L^{1, q})$-extension domain for $1 \leq q <p\leq n$.
 \end{itemize}
\end{abstract}

\section{Introduction}
Let $\mathcal X(\boz)$ be a (semi-)normed function space defined on a domain $\boz\subset\rn$ and $\mathcal Y(\rn)$ be a (semi-)normed function space defined on $\rn$. We say that $\boz\subset\rn$ is an $(\mathcal X, \mathcal Y)$-\emph{extension domain}, if there exists a bounded extension operator
\[E:\mathcal X(\boz)\to\mathcal Y(\rn),\]
that is, $E(u)|_{\Omega}\equiv u $ and there exists $C>0$ such that
\begin{equation*}
   \|E(u)\|_{\mathcal Y(\mathbb{R}^n)}\leq C\|u\|_{\mathcal X(\Omega)}. 
\end{equation*}
Let $1\leq p \leq \infty$. Recall that the Sobolev space $W^{1,p}(\Omega)$ consists of all $L^p$-integrable functions whose first order distributional derivatives belong to $L^p(\Omega).$  The homogeneous Sobolev space $L^{1, p}(\boz)$ is the semi-normed space of all locally $L^1$-integrable functions whose first order distributional derivatives belong to $L^p(\boz)$.
Clearly, $W^{1, p}(\boz)$ is always a subclass of $L^{1, p}(\boz)$, but the other implication is not always true.  
% True, e.g., set an example for the converse and erase the example statement.
However, functions in $L^{1,p}(\Omega)$ actually always belong to $W^{1,p}_{loc}(\Omega)$, see \cite{M}. For sufficiently nice $\Omega$ these two function spaces coincide as sets. If $\Omega$ is an $(L^{1,p}, L^{1,q})$-extension domain with $1\leq q \leq p$, then functions $u$ in $W^{1,p}_{loc}(\Omega)$ with $|\nabla u|\in L^p(\Omega)$ can be extended to $W^{1,q}_{loc}(\mathbb{R}^n)$ with gradient control.

The aim of this paper is to investigate the interconnection between the above two types of Sobolev extension domains.
In PDEs (e.g., see \cite{Mazya}) one typically relies on gradient estimates and hence the homogeneous version is perhaps more natural. On the other hand, $W^{1,p}(\Omega)$ is the Sobolev space that one most frequently encounters in text books. Bounded Lipschitz domains are $(W^{1,p}, W^{1,p})$-extension domains for $1\leq p\leq \infty$ by results of Calder\'on \cite{calderon} and Stein \cite{stein}. It follows from the respective proofs that they are also $(L^{1,p}, L^{1,p})$-extension domains. Years later, Jones \cite{Jones:acta} proved that the more general $(\epsilon, \delta)$-domains are also $(W^{1,p},W^{1,p})$-extension domains. By analyzing the proof in \cite{Jones:acta} one notices that bounded $(\epsilon, \delta)$-domains are also $(L^{1,p}, L^{1,p})$-extension domains.

The simultaneous extension properties above are no coincidences: by results of Herron and Koskela \cite{HKjam}, a bounded domain is a $(W^{1,p}, W^{1,p})$-extension domain precisely when it is an $(L^{1,p}, L^{1,p})$-extension domain (when $1<p<\infty$). 

In this paper, we deal with the case $1\leq q < p \leq \infty$. Extension domains can be rather irregular in this case. For example, an exterior monomial cusp or spire is an extension domain for suitable choices of $q$ and $p$. For this see \cite{Mazya2} by Maz'ya and Poborchi. Furthermore, there exist extension domains $\Omega $ with $|\partial\Omega|>0$ contrary to the case $q=p$, see \cite{KUZ:JFA}, \cite{HKT:JFA}. Nevertheless, one could expect that the two classes of extension domains coincide.

Our first result shows that extendability does not always guarantee gradient control. 

\begin{thm}\label{th:count}
Let $1\leq q<\frac{n}{2}$. There exists a bounded domain $\boz\subset\rn$ which is a $(W^{1, p}, W^{1, q})$-extension domain for every $q^\star<p\leq \infty$, but is not an $(L^{1, p}, L^{1, q})$-extension domain for $1\leq q<p\leq n$. Here $q^\star= \frac{qn}{n-q}$.
\end{thm}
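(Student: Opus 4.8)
The plan is to construct $\boz$ as a ``room and corridor'' type domain: take a sequence of pairwise disjoint open cubes $Q_j$ (the rooms) of side length $\ell_j\to 0$, placed so that their closures accumulate at a single boundary point, say the origin, and connect consecutive rooms $Q_j$ to a fixed ``main body'' cube $Q_0$ by thin tubes $T_j$ of length comparable to $\dist(Q_j,Q_0)$ and cross-sectional width $\delta_j\ll\ell_j$. The parameters $\ell_j,\delta_j$ are to be tuned so that the domain is thin enough that gradient control must fail for exponents $q<p\le n$, yet fat enough that the full Sobolev norm (which also sees $\|u\|_{L^q}$) can still be controlled when $p$ is large, i.e. $p>q^\star$. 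The key mechanism is that for the homogeneous space the extension of a test function that is $0$ on $Q_0$ and $1$ on $Q_j$ has gradient energy on $\rn\setminus\boz$ bounded below by a capacity-type quantity around $Q_j$ that is \emph{larger} than what the tube $T_j$ contributes to $\|\nabla u\|_{L^q(\boz)}$; summing over $j$ with a suitable choice of coefficients gives a function in $L^{1,p}(\boz)$ (indeed in $W^{1,p}(\boz)$) whose every extension has infinite $L^{1,q}(\rn)$ seminorm, or at least no uniform bound.

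I would carry this out in the following steps. \textbf{Step 1 (construction).} Fix the geometry and choose $\ell_j = 2^{-j}$, $\delta_j = 2^{-\alpha j}$ with $\alpha$ large, to be pinned down; verify $\boz$ is a bounded domain. \textbf{Step 2 (positive direction: $(W^{1,p},W^{1,q})$-extension for $q^\star<p\le\infty$).} For $u\in W^{1,p}(\boz)$, use a Whitney-type extension adapted to the corridor structure: reflect/average $u$ across the tubes and rooms. Because $p>q^\star=\frac{qn}{n-q}$, the Sobolev--Poincaré inequality on each room gives enough integrability that the oscillation of $u$ over $Q_j$ is controlled by $\ell_j^{1-n/p}\|\nabla u\|_{L^p(Q_j)}$ plus an $L^p$ term, and the resulting extension's $L^q$ and gradient norms over the ``filled-in'' cusp region near the origin are summable; here one crucially uses that the $L^p(\boz)$ part of the norm is available (this is exactly why the statement allows all $p>q^\star$ but the negative part only concerns the homogeneous space). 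One packages this as: $\boz$ is the union of an $(\epsilon,\delta)$-type piece and countably many rooms/tubes each of which individually extends with a geometric-series-summable constant. \textbf{Step 3 (negative direction).} Define $u$ on $\boz$ by setting $u\equiv c_j$ on $Q_j$, $u\equiv 0$ on $Q_0$, and interpolating linearly along each tube $T_j$; then $\|\nabla u\|_{L^q(\boz)}^q \approx \sum_j |c_j|^q \delta_j^{n-1}(\mathrm{length}\,T_j)^{1-q}$, and $\|u\|_{L^p(\boz)}^p\approx\sum_j|c_j|^p\ell_j^n$. Choose $c_j$ so that both series converge (so $u\in W^{1,p}(\boz)$) while $\sum_j |c_j|^q \operatorname{cap}_q(Q_j;2Q_j) = \infty$. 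Any extension $Eu$ equals $c_j$ on $Q_j$ and $0$ on $Q_0$, so on the ``gap'' $2Q_j\setminus\boz$ (which has measure comparable to $\ell_j^n$ once $\delta_j\ll\ell_j$) it must transition between these values, forcing $\|\nabla Eu\|_{L^q(2Q_j\setminus\boz)}^q \gtrsim |c_j|^q\operatorname{cap}_q(Q_j;2Q_j)\approx |c_j|^q \ell_j^{n-q}$ when $q<n$; summing over the disjoint annuli gives $\|\nabla Eu\|_{L^q(\rn)}=\infty$, contradicting $(L^{1,p},L^{1,q})$-extendability. \textbf{Step 4} is the bookkeeping: show the same parameters $\ell_j,\delta_j,c_j$ can simultaneously satisfy the convergence requirements of Step 2 and the divergence requirement of Step 3, which is where the hypothesis $1\le q<n/2$ (rather than merely $q<n$) and the restriction $p\le n$ in the conclusion enter.

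The main obstacle is Step 2: proving the \emph{positive} extension result for the whole range $q^\star<p\le\infty$ for this particular cuspidal domain. One has to build an extension operator that behaves well on the degenerating rooms-and-tubes part near the origin, controlling both the gradient and the $L^q$ mass of the extension into the filled region $\bigl(\bigcup 2Q_j\bigr)\setminus\boz$ and into the solid cusp between them, and show the constants assemble into a convergent geometric series; the competition is precisely between the negative mechanism of Step 3 (which wants $\operatorname{cap}_q(Q_j)$ large relative to the tube energy) and the positive mechanism (which needs the extension cost per room to be dominated by the room's own $W^{1,p}$ content). Making these two balance for all $p>q^\star$ while failing for all $p\le n$ is the delicate part; I expect to need the sharp Sobolev embedding $W^{1,p}(Q_j)\hookrightarrow L^\infty$-type or $C^{0,1-n/p}$ estimates on the rooms together with careful choice of the tube widths $\delta_j$ as a function of both $\ell_j$ and the target exponents.
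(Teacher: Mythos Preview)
Your overall strategy---a domain with many pieces joined through thin necks, a Whitney extension for the positive direction exploiting $p>q^\star$, and a capacity lower bound for the negative direction---matches the paper's. But there are two genuine gaps.

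\medskip
\noindent\textbf{Step 3 is internally inconsistent.} You ask for a single $u\in W^{1,p}(\boz)$ whose every extension has infinite $L^{1,q}(\rn)$-seminorm. This is impossible once Step~2 succeeds: if $\boz$ is a $(W^{1,p},W^{1,q})$-extension domain and $u\in W^{1,p}(\boz)$, then $u$ \emph{does} have an extension in $W^{1,q}(\rn)\subset L^{1,q}(\rn)$ with finite gradient norm. Concretely, with $\ell_j=2^{-j}$ the two conditions $\sum_j |c_j|^p\ell_j^n<\fz$ and $\sum_j |c_j|^q\ell_j^{n-q}=\fz$ are compatible only when $n(p-q)<pq$, i.e.\ $p<q^\star$; for the target range $q^\star<p\le n$ they are mutually exclusive. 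Your hedge ``or at least no uniform bound'' is the correct statement, but then you must argue with a \emph{sequence}: take $u_k$ supported near a single neck, make the neck so thin that $\|\nabla u_k\|_{L^p(\boz)}$ is tiny, and use Sobolev--Poincar\'e on a ball containing the two fat pieces to bound every extension from below. This is exactly what the paper does; the neck widths $l_k$ are chosen double-exponentially small so that the ratio $\|\nabla E(u_k)\|_{L^q}/\|\nabla u_k\|_{L^p}\gtrsim 2^k$.

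\medskip
\noindent\textbf{Cylindrical tubes obstruct Step 2.} A Whitney extension needs each exterior Whitney cube $Q$ to have a reflected cube $Q^\star\subset\boz$ with $l(Q^\star)\sim l(Q)$. Along a tube of uniform width $\delta_j\ll\ell_j$, exterior cubes at distance $r$ from the tube (with $\delta_j\ll r\ll\ell_j$) have no comparable partner in $\boz$: the nearest piece of $\boz$ is the tube itself, of width $\delta_j$. This forces you to reflect into the rooms $Q_j$ or $Q_0$, and then adjacent exterior cubes reflect to opposite rooms all along the length of the tube, not just at a single waist. The paper avoids this by using \emph{double cones} (hourglasses) rather than cylinders: each half-cone is an $(\epsilon,\delta)$-domain, so the Whitney reflection works with uniformly bounded chains except at the single waist level $P_k$. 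The ``bad'' pairs---neighbouring exterior cubes whose reflections land in opposite cones---then occur only at scales $\gtrsim l_k$, at most $O(j)$ of them at dyadic scale $2^{-j}$, and each contributes $\lesssim l(Q)^{n-q-nq/p}\|u\|_{L^p(\boz)}^q$; this sums because $n-q-nq/p>0$ precisely when $p>q^\star$. Your tube geometry does not localize the bad pairs in this way, and you would need a different mechanism to make Step~2 go through.
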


Our second result shows that the case $p<q^\star$ is analogous to the case $q=p$. Towards the statement, we extend the definition of $q^\star$ by setting $q^\star =\infty$ when $q\geq n $.  
\begin{thm}\label{prop1}
Let $1\leq q\leq  p<q^\star\leq \infty$. Then a bounded domain is a $(W^{1, p}, W^{1, q})$-extension domain if and only if it is an $(L^{1, p}, L^{1, q})$-extension domain.
\end{thm}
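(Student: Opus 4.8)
The plan is to prove both implications, with the forward direction (from $(W^{1,p},W^{1,q})$-extension to $(L^{1,p},L^{1,q})$-extension) being the substantial one and the reverse direction following from the first bulleted result in the abstract. So I would first record the easy direction: if $\boz$ is a bounded $(L^{1,p},L^{1,q})$-extension domain, then by the already-stated result (a bounded $(L^{1,p},L^{1,q})$-extension domain is a $(W^{1,p},W^{1,q})$-extension domain) we are done. It remains to show: if $\boz$ is a bounded $(W^{1,p},W^{1,q})$-extension domain, then it is an $(L^{1,p},L^{1,q})$-extension domain, under the hypothesis $1\le q\le p<q^\star$.

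For the hard direction, I would start from $u\in L^{1,p}(\boz)$, i.e.\ $u\in W^{1,p}_{loc}(\boz)$ with $|\nabla u|\in L^p(\boz)$, and try to reduce to a function to which I can apply the $(W^{1,p},W^{1,q})$-extension operator $E$. Since $\boz$ is bounded, the natural move is to subtract a constant: replace $u$ by $v=u-c$ for a suitable $c$ (e.g.\ an integral average of $u$ over a fixed ball compactly contained in $\boz$, or a median), so that $v$ has the same gradient but controlled size. The key point I need is a \emph{global} Sobolev–Poincaré-type inequality on $\boz$ of the form $\|u-c\|_{L^{p^\star}(\boz)}\lesssim \|\nabla u\|_{L^p(\boz)}$ when $p<n$, and $\|u-c\|_{L^\infty(\boz)}$ or $\|u-c\|_{L^r(\boz)}\lesssim\|\nabla u\|_{L^p(\boz)}$ when $p\ge n$; such an inequality is available precisely because $\boz$, being a $(W^{1,p},W^{1,q})$-extension domain, must itself be a $W^{1,p}$-Sobolev-embedding domain (extension gives the embedding via the embedding on $\rn$). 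This shows $L^{1,p}(\boz)=W^{1,p}(\boz)$ as sets with $\|u-c\|_{W^{1,p}(\boz)}\lesssim\|\nabla u\|_{L^p(\boz)}$, so $v=u-c\in W^{1,p}(\boz)$ and $\|v\|_{W^{1,p}(\boz)}\lesssim \|\nabla u\|_{L^p(\boz)}=\|u\|_{L^{1,p}(\boz)}$.

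Now apply the bounded operator $E:W^{1,p}(\boz)\to W^{1,q}(\rn)$ to $v$, and cut off: since $E(v)\in W^{1,q}(\rn)$ with $\|E(v)\|_{W^{1,q}(\rn)}\lesssim\|v\|_{W^{1,p}(\boz)}$, multiply by a fixed smooth compactly supported $\eta\equiv 1$ on a large ball containing $\overline{\boz}$ to get $w=\eta E(v)$, which lies in $W^{1,q}(\rn)$ with compact support, agrees with $v$ on $\boz$, and satisfies $\|\nabla w\|_{L^q(\rn)}\lesssim\|\nabla E(v)\|_{L^q(\rn)}+\|E(v)\|_{L^q(\rn)}\lesssim\|v\|_{W^{1,p}(\boz)}\lesssim\|u\|_{L^{1,p}(\boz)}$. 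Finally set $\mathcal E(u)=w+c$; then $\mathcal E(u)|_\boz=v+c=u$, $\mathcal E(u)\in L^{1,q}(\rn)$ (in fact in $W^{1,q}_{loc}(\rn)$ with globally $L^q$ gradient), and $\|\nabla \mathcal E(u)\|_{L^q(\rn)}\lesssim\|u\|_{L^{1,p}(\boz)}$. Linearity of $\mathcal E$ follows from linearity of $E$ and of the averaging $c=c(u)$.

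The main obstacle I anticipate is justifying the global Poincaré/embedding inequality on $\boz$ with the constant $c$ uniform over all $u\in L^{1,p}(\boz)$ — i.e.\ showing $\|u-c\|_{W^{1,p}(\boz)}\lesssim\|\nabla u\|_{L^p(\boz)}$ — and this is exactly where the hypothesis $p<q^\star$ (equivalently, that the target exponent $q$ is large enough relative to $p$ and $n$, or $q\ge n$) should enter. One must check that $\boz$ supports such an inequality: the cleanest route is to deduce from boundedness of $E:W^{1,p}(\boz)\to W^{1,q}(\rn)$ together with the global Sobolev embedding $W^{1,q}(\rn)\hookrightarrow L^{q^\star}(\rn)$ (or $L^\infty$ when $q>n$) that $\boz$ satisfies $W^{1,p}$-$L^{q^\star}$ and hence, since $p<q^\star$ and $\boz$ is bounded, a genuine $(p,p)$-Poincaré inequality by Hölder. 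Handling the borderline/large-$p$ cases $p\ge n$ and making sure the subtracted constant can be chosen linearly in $u$ and with the right bound are the technical points requiring care; everything else is the routine cutoff-and-translate argument above.
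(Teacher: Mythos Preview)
Your overall architecture matches the paper's: subtract a constant to land in $W^{1,p}(\boz)$, apply the given operator $E$, then add the constant back (the paper restricts to a large ball and uses that balls are $(L^{1,q},L^{1,q})$-extension domains, which is equivalent to your cutoff). You also correctly isolate the crux: one must establish a global $(p,p)$-Poincar\'e inequality on $\boz$, and this is where the hypothesis $p<q^\star$ enters.

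The gap is in your proposed route to that Poincar\'e inequality. From boundedness of $E:W^{1,p}(\boz)\to W^{1,q}(\rn)$ together with the Sobolev embedding $W^{1,q}\hookrightarrow L^{q^\star}$ you only obtain $\|u\|_{L^{q^\star}(\boz)}\lesssim\|u\|_{W^{1,p}(\boz)}$; H\"older then yields the trivial $\|u\|_{L^p(\boz)}\lesssim\|u\|_{W^{1,p}(\boz)}$, not $\|u-c\|_{L^p(\boz)}\lesssim\|\nabla u\|_{L^p(\boz)}$. A continuous embedding into a higher Lebesgue space does not by itself produce a Poincar\'e inequality. What the paper does instead is use the extension into $W^{1,q}(B)$ for a fixed ball $B\supset\overline\boz$ and invoke Rellich--Kondrachov (valid precisely because $p<q^\star$) to get that $W^{1,q}(B)\hookrightarrow L^p(B)$ is \emph{compact}; composing with $E$ and restricting shows $W^{1,p}(\boz)\hookrightarrow L^p(\boz)$ is compact, and then the Smith--Stegenga theorem (compact embedding implies $(p,p)$-Poincar\'e) closes the argument.

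A second, smaller omission: once you have the $(p,p)$-Poincar\'e on $W^{1,p}(\boz)$, you assert $L^{1,p}(\boz)=W^{1,p}(\boz)$ with the right norm control, but this does not follow immediately since a function $u\in L^{1,p}(\boz)$ is a priori only in $L^1_{\rm loc}(\boz)$ and you cannot plug it directly into the Poincar\'e inequality. The paper handles this by truncating, $u_m:=\min\{m,\max\{-m,u\}\}\in W^{1,p}(\boz)$, and using the Poincar\'e inequality (with average over a fixed level set of $u$) to show $\{u_m\}$ is Cauchy in $W^{1,p}(\boz)$; since $u_m\to u$ pointwise, this yields $u\in W^{1,p}(\boz)$.
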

Theorem \ref{prop1} does not allow for $p=\infty$, but the claim actually does also hold for $n<q\leq p\leq \infty$, see Proposition \ref{prop2} and Proposition \ref{prop3}. Actually, $(L^{1,p}, L^{1,q})$-extensions always guarantee $(W^{1,p}, W^{1,q})$-extensions, see Proposition \ref{prop2} below. This verifies the expectation that gradient control should be harder to establish than norm control.

Theorem \ref{prop1} is proven via appropriate modifications to the arguments in \cite{HKT:JFA}. For the convenience of the reader, we present all the essential details. The crucial consequence of the assumption $p<q^\star$ is that it allows us to employ the Rellich-Kondrachov compact embedding theorem to ensure that $W^{1,p}(\Omega)$ embeds compactly into $L^p(\Omega)$.
 \section{Preliminaries}
We refer to generic positive constants by $C$. These constants may change even in a single string of estimates. The dependence of the constant on parameters $\alpha, \beta,\cdots$ is expressed by the notation $C = C(\alpha, \beta, \cdots)$ if needed.
First, let us give the definition of Sobolev spaces and homogeneous Sobolev spaces.
\begin{defn}\label{de:sobolev}
Given $u\in L^1_{\rm loc}(\boz)$, we say that $u$ belongs to the homogeneous Sobolev space $L^{1, p}(\boz)$ for $1\leq p\leq\fz$ if its distributional gradient $\nabla u$ belongs to $L^p(\boz; \rn)$. The homogeneous Sobolev space $L^{1, p}(\boz)$ is equipped with the semi-norm
\[\|u\|_{L^{1, p}(\boz)}:=\begin{cases}
\lf(\int_\boz\lf|\nabla u(x)\r|^pdx\r)^{\frac{1}{p}}, &\ {\rm for}\ 1\leq p<\fz,\\
{\rm esssup}\lf|\nabla u\r|, &\ {\rm for}\ p=\fz.
\end{cases}\] 
Furthermore, if $u$ is also $L^p$-integrable, then we say that $u$ belongs to the Sobolev space $W^{1 ,p}(\boz)$. The Sobolev space $W^{1, p}(\boz)$ is equipped with the norm
\[\|u\|_{W^{1, p}(\boz)}:=\begin{cases}
\lf(\int_\boz\lf|u(x)\r|^p+\lf|\nabla u(x)\r|^pdx\r)^{\frac{1}{p}}, &\ {\rm for}\ 1\leq p<\fz,\\
{\rm esssup}\lf(\lf|u\r|+\lf|\nabla u\r|\r), &\ {\rm for}\ p=\fz.
\end{cases}\] 
\end{defn}
\textbf{Poincar\'e inequality}:
Let $1\leq p<\fz$, $\boz\subset\rn$ be a bounded domain. The inequality 
\begin{equation}\label{eq:p-poin}
\int_\boz|u(x)-u_\boz|^pdx\leq C\int_\boz|\nabla u(x)|^pdx
\end{equation}
   for all $u\in W^{1,p}(\Omega)$ is referred to as a global $(p, p)$-Poincar\'e inequality. This inequality does not hold for all domains $\Omega$, but many domains satisfy it. Here $u_\Omega= \frac{1}{|\Omega|}\int_\Omega u(y)dy$.

The following theorem is taken from \cite[Theorem 12]{SS:tams}.
\begin{thm}\label{holder space}
Suppose $1\leq p <\infty$ and $|\Omega|<\infty$. If the embedding $W^{1,p}(\Omega) \hookrightarrow L^{p}(\Omega)$ is compact, then $\Omega$ satisfies a global $(p,p)$- Poincar\'e inequality.   
\end{thm}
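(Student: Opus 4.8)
The plan is to argue by contradiction in the standard compactness-based fashion. Suppose the global $(p,p)$-Poincar\'e inequality \eqref{eq:p-poin} fails for every constant $C$. Then for each $k\in\nn$ we can choose $u_k\in W^{1,p}(\Omega)$ with
\[
\int_\Omega|u_k-(u_k)_\Omega|^p\,dx > k\int_\Omega|\nabla u_k|^p\,dx ,
\]
the average $(u_k)_\Omega$ being well defined because $|\Omega|<\infty$ forces $W^{1,p}(\Omega)\subset L^1(\Omega)$. Normalising, set $v_k=(u_k-(u_k)_\Omega)/\|u_k-(u_k)_\Omega\|_{L^p(\Omega)}$; then $(v_k)_\Omega=0$, $\|v_k\|_{L^p(\Omega)}=1$, and $\|\nabla v_k\|_{L^p(\Omega)}^p<1/k\to0$, so in particular the sequence $(v_k)$ is bounded in $W^{1,p}(\Omega)$.

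By the assumed compactness of the embedding $W^{1,p}(\Omega)\hookrightarrow L^p(\Omega)$, a subsequence (not relabelled) satisfies $v_k\to v$ in $L^p(\Omega)$ for some $v\in L^p(\Omega)$. Since $|\Omega|<\infty$, this is also $L^1(\Omega)$ convergence, whence $\|v\|_{L^p(\Omega)}=1$ and $\int_\Omega v\,dx=\lim_k\int_\Omega v_k\,dx=0$. I would then identify the distributional gradient of $v$: for every $\varphi\in C_c^\infty(\Omega)$ and every coordinate direction $i$,
\[
\int_\Omega v\,\partial_i\varphi\,dx=\lim_{k\to\infty}\int_\Omega v_k\,\partial_i\varphi\,dx=-\lim_{k\to\infty}\int_\Omega(\partial_i v_k)\,\varphi\,dx=0 ,
\]
where the first equality uses $v_k\to v$ in $L^1(\Omega)$ and the boundedness of $\partial_i\varphi$, and the last uses $\|\nabla v_k\|_{L^p(\Omega)}\to0$ together with H\"older's inequality on the support of $\varphi$. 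Hence $\nabla v=0$ as a distribution, so $v\in W^{1,p}(\Omega)$ with vanishing gradient.

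Since $\Omega$ is a domain, it is connected, and a locally integrable function on a connected open set whose distributional gradient vanishes is a.e.\ equal to a constant. Combined with $\int_\Omega v\,dx=0$, this forces $v\equiv0$, contradicting $\|v\|_{L^p(\Omega)}=1$. Therefore \eqref{eq:p-poin} must hold with some finite $C$, which is the assertion.

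I expect no serious obstacle: the argument is classical. The only place where the hypothesis is genuinely needed — rather than soft functional analysis — is the extraction of an $L^p$-convergent subsequence; for $p=1$ this cannot be obtained from weak compactness in $W^{1,1}(\Omega)$ and really requires the compact embedding, while the stability of the distributional gradient under $L^p$ limits and the constancy of gradient-free functions on a connected open set are standard facts that I would invoke without detailed proof.
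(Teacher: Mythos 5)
Your proof is correct. Note that the paper itself does not prove Theorem \ref{holder space}: it is quoted verbatim from Smith--Stegenga \cite[Theorem 12]{SS:tams}, so there is no in-paper argument to compare against. The compactness-normalization argument you give --- assume the inequality fails, normalize so that $(v_k)_\Omega=0$, $\|v_k\|_{L^p(\Omega)}=1$, $\|\nabla v_k\|_{L^p(\Omega)}\to 0$, extract an $L^p$-convergent subsequence using the compact embedding, pass the distributional gradient to the limit, and use connectedness to conclude $v$ is constant, hence zero, contradicting $\|v\|_{L^p(\Omega)}=1$ --- is the canonical proof and is, up to cosmetic differences, the one in the cited reference. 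All the small points you would need to be careful about are handled: $u_k\in L^1(\Omega)$ so $(u_k)_\Omega$ is defined (via $|\Omega|<\infty$ and H\"older); the denominator $\|u_k-(u_k)_\Omega\|_{L^p(\Omega)}$ is strictly positive whenever the defining inequality is violated; and $L^p(\Omega)\hookrightarrow L^1_{\mathrm{loc}}(\Omega)$ justifies passing the pairing with $\partial_i\varphi$ to the limit. Your remark that the $p=1$ case genuinely needs the compactness hypothesis (rather than reflexivity) is apt.
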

We continue with a simple fact whose proof we include for the convenience of the reader.
\begin{lem}\label{hlemma}
   Let $1\leq p<\infty$ and let $\Omega\subset \mathbb{R}^n$ be a bounded domain that supports a global $(p,p)$- Poincar\'e inequality. If $A\subset \Omega$ is of positive measure, then there exists a constant $C>0$ such that
   \begin{equation}
      \lf(\int_\Omega|u(z)-u_{A}|^pdz\r)^{\frac{1}{p}} \leq C\lf(\int_\Omega|\nabla u(z)|^pdz\r)^{\frac{1}{p}}.
   \end{equation}
   holds for all $u\in W^{1,p}(\Omega).$
\end{lem}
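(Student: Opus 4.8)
The plan is to reduce the inequality centred at the average $u_A$ to the assumed Poincaré inequality \eqref{eq:p-poin}, which is centred at $u_\Omega$, and to pay for the change of base point with the (fixed, positive) measure of $A$. First I would apply the triangle inequality in $L^p(\Omega)$:
\[
\lf(\int_\Omega|u(z)-u_A|^pdz\r)^{\frac1p}\leq \lf(\int_\Omega|u(z)-u_\Omega|^pdz\r)^{\frac1p}+\lf(\int_\Omega|u_\Omega-u_A|^pdz\r)^{\frac1p},
\]
and bound the first term on the right directly by $C\lf(\int_\Omega|\nabla u(z)|^pdz\r)^{1/p}$ using \eqref{eq:p-poin}.

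For the second term, note that $u_\Omega-u_A$ is a constant, so the corresponding integral equals $|\Omega|^{1/p}\,|u_\Omega-u_A|$. It remains to estimate $|u_\Omega-u_A|$. Here I would write $u_\Omega-u_A=\frac1{|A|}\int_A(u_\Omega-u(z))\,dz$, use $A\subset\Omega$, and apply Hölder's inequality to get
\[
|u_\Omega-u_A|\leq \frac1{|A|}\int_\Omega|u(z)-u_\Omega|\,dz\leq \frac{|\Omega|^{1-\frac1p}}{|A|}\lf(\int_\Omega|u(z)-u_\Omega|^pdz\r)^{\frac1p},
\]
and then invoke \eqref{eq:p-poin} once more to dominate the right-hand side by $\frac{C|\Omega|^{1-1/p}}{|A|}\lf(\int_\Omega|\nabla u(z)|^pdz\r)^{1/p}$. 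Combining the two contributions yields the claimed estimate with a constant depending only on $p$, $|\Omega|$, $|A|$, and the Poincaré constant of $\Omega$.

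This argument is elementary; the only step that is not a direct appeal to the hypothesis or to Hölder's inequality is the change-of-base-point estimate for $|u_\Omega-u_A|$, and even that is routine. The one point worth noting is that the resulting constant degenerates as $|A|\to 0$, which is harmless since $A$ is a fixed set of positive measure. I would not expect any genuine obstacle in carrying this out.
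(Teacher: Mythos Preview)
Your proposal is correct and follows essentially the same argument as the paper: split via the triangle inequality, control the $u_\Omega$-centred term by the assumed Poincar\'e inequality, and bound $|u_\Omega-u_A|$ by averaging over $A$, enlarging to $\Omega$, applying H\"older, and using Poincar\'e once more. The resulting constant $C\bigl(1+\frac{|\Omega|}{|A|}\bigr)$ matches the paper's, and your remark about degeneration as $|A|\to 0$ is the only additional commentary.
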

\begin{proof}
We have
    \begin{eqnarray*}
\lf(\int_\Omega|u(z)-u_A|^pdz\r)^{\frac{1}{p}} 
&\leq& \lf(\int_\Omega|u(z)-u_\Omega|^pdz\r)^{\frac{1}{p}}
 +\lf(\int _\Omega|u_\Omega-u_A|^pdz\r)^{\frac{1}{p}}, \\
&\leq&  C\lf(\int_\Omega|\nabla u(z)|^pdz\r)^{\frac{1}{p}} + |\Omega|^{\frac{1}{p}}|u_{A}-u_{\Omega}|,\\
&\leq& C \lf(\int_\Omega|\nabla u(z)|^pdz\r)^{\frac{1}{p}} + \frac{|\Omega|^{\frac{1}{p}}}{|A|}\lf(\int_A|u(z)-u_{\Omega}|dz\r),\\
&\leq &C \lf(\int_\Omega|\nabla u(z)|^pdz\r)^{\frac{1}{p}} + \frac{|\Omega|^{\frac{1}{p}}}{|A|}\lf(\int_\Omega|u(z)-u_{\Omega}|dz\r),\\
&\leq &C \lf(\int_\Omega|\nabla u(z)|^pdz\r)^{\frac{1}{p}} + \frac{|\Omega|}{|A|}\lf(\int_\Omega|u(z)-u_{\Omega}|^pdz\r)^{\frac{1}{p}},\\
&\leq& C\lf(1+ \frac{|\Omega|}{|A|}\r)\lf(\int_\Omega|\nabla u(z)|^pdz\r)^{\frac{1}{p}},\\
&\leq & C \lf(\int_\Omega|\nabla u(z)|^pdz\r)^{\frac{1}{p}}.\nonumber
\end{eqnarray*}
\end{proof}
%Then we give the definition of homogeneous Sobolev and Sobolev extension domains.
%\begin{defn}\label{de:extension}
%A bounded domain $\boz\subset\rn$ is a called a $(L^{1, p}, L^{1, q})$-extension domain for $1\leq q\leq p\leq\fz$, if for every $u\in L^{1, p}(\boz)$, there exists a function $E(u)\in L^{1, q}(\rn)$ with $E(u)\big|_\boz\equiv u$ and 
%\[\|E(u)\|_{L^{1, q}(\rn)}\leq C\|u\|_{L^{1, p}(\boz)}\]
%for a constant $C$ independent of $u$. Replace homogeneous Sobolev spaces $L^{1, p}(\boz)$ and $L^{1, q}(\rn)$ by Sobolev spaces $W^{1, p}(\boz)$ and $W^{1,q}(\rn)$ respectively, we obtain the definition of $(W^{1, p}, W^{1, q})$-extension domain.
%\end{defn}
Let us recall the definition of $(\epsilon, \delta)$-domains.
\begin{defn}\label{de:ED}
Let $0<\epsilon<1$ and $\delta>0$. We say that a domain $\boz\subset\rn$ is an $(\epsilon, \delta)$-domain, if for all $x, y\in\boz$ with $|x-y|<\delta$, there is a rectifiable curve $\gamma\subset\boz$ joining $x, y$ with
\begin{equation}\label{eq:uni1}
{\rm length}(\gamma)\leq\frac{1}{\epsilon}|x-y|
\end{equation}
and
\begin{equation}\label{eq:uni2}
d(z, \partial\boz)\geq\frac{\epsilon|x-z||y-z|}{|x-y|}\ {\rm for\ all}\ z\ {\rm on}\ \gamma.
\end{equation}
\end{defn}

By a classical mollification argument, for an arbitrary domain $\boz$, we always have that $C^\fz(\boz)\cap W^{1, p}(\boz)$ is dense in $W^{1, p}(\boz)$ for every $1\leq p<\fz$. See \cite[Theorem 4.2]{Evans:book}. However, $C^\fz(\overline\boz)\cap W^{1, p}(\boz)$ need not be dense in $W^{1, p}(\boz)$, unless the domain $\boz$ is sufficiently nice.
\begin{defn}\label{de:segment}
Let $\boz\subset\rn$ be a domain. We say that it satisfies the segment condition if for every $x\in\partial\boz$, there exists a neighborhood $U_x$ of $x$ and a nonzero vector $y_x$ such that if $z\in\overline\boz\cap U_x$, then $z+ty_x\in\boz$ for $0<t<1$.
\end{defn}
The following lemma tells us that the Sobolev functions defined on a domain with segment condition can be approximated by functions in $C^\fz(\overline\boz)$.
\begin{lem}\label{le:smooth}
Let $\boz\subset\rn$ be a domain which satisfies the segment condition. Then for every $1\leq p<\fz$, the subspace $C^\fz(\overline\boz)\cap W^{1, p}(\boz)$ is dense in $W^{1, p}(\boz)$.
\end{lem}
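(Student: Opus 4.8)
The plan is to run the classical localization-and-translation argument adapted to the segment condition (cf.\ Adams, \emph{Sobolev Spaces}). Fix $u\in W^{1,p}(\boz)$ and $\epsilon>0$. First I would cover $\partial\boz$ by the neighbourhoods $U_x$ supplied by Definition~\ref{de:segment} — a finite subfamily $U_1,\dots,U_N$ with associated vectors $y_1,\dots,y_N$ when $\boz$ is bounded, a countable locally finite refinement in general — and add one more open set $U_0$ with $\overline{U_0}\subset\boz$ so that $\overline\boz\subset\bigcup_j U_j$. Taking a smooth partition of unity $\{\varphi_j\}$ subordinate to this cover, the Leibniz rule gives $u_j:=\varphi_j u\in W^{1,p}(\boz)$ with support contained in $K_j\cap\overline\boz$, where $K_j:=\operatorname{supp}\varphi_j$ is a compact subset of $U_j$, and $u=\sum_j u_j$ with the sum locally finite. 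It therefore suffices to approximate each $u_j$ in $W^{1,p}(\boz)$ by a function in $C^\infty(\overline\boz)\cap W^{1,p}(\boz)$ supported in a small neighbourhood of $K_j$.

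The interior piece $u_0$ is supported in a compact subset of the open set $\boz$, so an ordinary mollification $u_0*\rho_\eta\in C^\infty_c(\boz)$ converges to it in $W^{1,p}(\boz)$ and this case is done. For a boundary piece $u_j$, $j\geq1$, write $y=y_j$. The crux of the segment condition is that translation in the direction $y$ pushes $\overline\boz\cap U_j$ into the \emph{open} set $\boz$: $z\in\overline\boz\cap U_j$ and $0<t<1$ imply $z+ty\in\boz$. Hence for small $t$ the compact set $(K_j\cap\overline\boz)+ty$ lies in $\boz$, and so has positive distance to $\partial\boz$; consequently, for $\eta$ small (depending on $t$) the function
\[
 g_j(z):=\int_{\abs{w}<\eta}\rho_\eta(w)\,u_j(z+ty-w)\,dw
\]
is well defined on a neighbourhood of $\overline\boz$ — near $K_j$ because the whole ball $B(z+ty,\eta)$ sits inside $\boz$, and away from $K_j$ because the integrand vanishes identically — it is $C^\infty$ there, and it is supported in a small neighbourhood of $K_j$; in particular $g_j\in C^\infty(\overline\boz)\cap W^{1,p}(\boz)$ (in fact one may take $g_j\in C^\infty_c(\rn)$).

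It remains to show $g_j\to u_j$ in $W^{1,p}(\boz)$ as $t,\eta\to0$. Both functions vanish outside a fixed small neighbourhood of $K_j$, so it is enough to estimate on a set $\boz\cap V$ with $V$ a neighbourhood of $K_j$, $\overline V\subset U_j$. There every point $z+ty-w$ with $\abs{w}<\eta$ lies in $\boz$, so differentiating under the integral sign and invoking the definition of the weak gradient yields $\nabla g_j(z)=\int\rho_\eta(w)\,\nabla u_j(z+ty-w)\,dw$. Subtracting $u_j(z),\nabla u_j(z)$ and applying Minkowski's integral inequality bounds $\norm{g_j-u_j}_{W^{1,p}(\boz)}$ by
\[
 \sup_{\abs{v}\le t\abs{y}+\eta}\Big(\norm{\bar{u}_j(\cdot+v)-\bar{u}_j}_{L^p(\rn)}+\norm{\overline{\nabla u_j}(\cdot+v)-\overline{\nabla u_j}}_{L^p(\rn)}\Big),
\]
where the bar denotes extension by zero to $\rn$; this tends to $0$ by continuity of translations in $L^p(\rn)$. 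The point to stress is that the zero-extensions enter only as $L^p(\rn)$ functions — we never need them to be weakly differentiable across $\partial\boz$ — which is exactly why the segment condition, rather than any regularity of $\partial\boz$, suffices. Summing the $g_j$ (a finite sum if $\boz$ is bounded, otherwise a locally finite sum with each $\norm{g_j-u_j}_{W^{1,p}}<\epsilon 2^{-j}$) produces the required approximation of $u$.

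I expect the real work to be the bookkeeping in the boundary step: verifying that the single convolution formula for $g_j$ indeed defines a function that is genuinely $C^\infty$ up to $\overline\boz$ with support under control, and that the smallness requirements on $t$ and then on $\eta$ are mutually compatible and can be driven to zero. The covering and partition-of-unity setup in the unbounded case is routine but somewhat tedious, and could be bypassed entirely by restricting the statement to bounded $\boz$, which is all that the extension theorems below require.
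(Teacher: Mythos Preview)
The paper does not actually prove this lemma; it is stated as a known result (the classical density theorem under the segment condition, see e.g.\ Adams, \emph{Sobolev Spaces}, Theorem~3.22) and invoked without proof. Your proposal reproduces exactly the standard textbook argument---localize by a partition of unity subordinate to the segment-condition cover, translate each boundary piece along its vector $y_j$ to push the support strictly inside $\boz$, then mollify---and is correct as sketched. The bookkeeping points you flag (compatibility of $t$ and $\eta$, and that $g_j$ is genuinely smooth across $\partial\boz$ because the convolution integrand is supported well inside $\boz$) are indeed the only places requiring care, and they go through just as you indicate.
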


In the proof of Theorem \ref{th:count},  we will construct a domain $\Omega$ which is a $(W^{1, p}, W^{1, q})$-extension domain for all $1\leq q <q^\star<p<\fz$ but not a $(L^{1, p}, L^{1, q})$-extension domain for any $1<q^\star\leq p\leq n$. We will employ a Whitney-type extension operator to prove that $\Omega$ is a $(W^{1, p}, W^{1, q})$-extension domain. During the development of the Sobolev extension theory, the so-called Whitney-type extension operators have been playing an essential role. They go back to a series of works \cite{Whitney1, Whitney2, Whitney3} by Whitney on extending functions defined on a subset to the whole space with smoothness preserved. Several variants of Whitney-type extension operators have been used in different kinds of extension problems, for example see articles \cite{AO:PJM, calderon, HKT:JFA, Jones:iumj, Jones:acta, KPZ:arxiv, S:2010, SZ:2016, stein, Z:2023}. To begin, every open set $U\varsubsetneq
\rn$ admits a Whitney decomposition, for example, see \cite[Chapter 6]{stein}. 
\begin{lem}\label{le:whitney}
For every open set $U\varsubsetneq\rn$, there exists a collection $\mathcal W(U)=\{Q_j\}_{j\in\mathbb N}$ of countably many closed dyadic cubes whose sides are parallel to the coordinate axis such that
\begin{itemize}
\item $U=\bigcup_{j\in\mathbb N}Q_j$ and $\mathring{Q_k}\cap\mathring{Q_j}=\emptyset$ for all $j, k\in\mathbb N$ with $j\neq k$;
\item $l(Q_j)\leq{\rm dist}(Q_j, \partial U)\leq 4\sqrt nl(Q_j)$ for every $j\in\mathbb N$;
\item $\frac{1}{4}l(Q_k)\leq l(Q_j)\leq 4l(Q_k)$ whenever $k, j\in\mathbb N$ with $Q_k\cap Q_j\neq\emptyset$. 
\end{itemize}
\end{lem}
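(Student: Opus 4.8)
The plan is to carry out the classical Whitney construction (as in \cite[Chapter~6]{stein}) and then read off the three properties. Throughout, $l(Q)$ denotes the side length of a cube $Q$, so that $\diam Q=\sqrt n\,l(Q)$; and since $U\varsubsetneq\rn$ is a nonempty open set, $\partial U\neq\emptyset$ and $\dist(x,\partial U)\in(0,\infty)$ for every $x\in U$.

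First I would fix a dimensional calibration constant, say $c=2\sqrt n$, and for each $x\in U$ choose $k(x)\in\zz$ with $c\,2^{-k(x)}\leq\dist(x,\partial U)<2c\,2^{-k(x)}$, together with a closed dyadic cube $Q_x$ of side $2^{-k(x)}$ containing $x$. Since $|x-y|\leq\diam Q_x=\sqrt n\,l(Q_x)$ for every $y\in Q_x$, the triangle inequality yields the two-sided estimate
\[
\sqrt n\,l(Q_x)\ \leq\ \dist(y,\partial U)\ <\ 5\sqrt n\,l(Q_x)\qquad\text{for all }y\in Q_x,
\]
whence $Q_x\subset U$ and $l(Q_x)\leq\dist(Q_x,\partial U)\leq 4\sqrt n\,l(Q_x)$. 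The family $\mathcal F_0=\{Q_x:x\in U\}$ then covers $U$.

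Next I would pass to maximal cubes. Two dyadic cubes are either interior-disjoint or one of them contains the other; and for a fixed $Q\in\mathcal F_0$ the cubes of $\mathcal F_0$ containing $Q$ form a nested chain whose side lengths are bounded above (by $\dist(Q,\partial U)$, using the second estimate), hence this chain is finite and $Q$ lies in a unique maximal cube $Q^{\ast}\in\mathcal F_0$. Let $\mathcal W(U)=\{Q^{\ast}:Q\in\mathcal F_0\}$, enumerated as $\{Q_j\}_{j\in\nn}$. It is a subfamily of $\mathcal F_0$, so the estimate above still holds on every $Q_j$; it is a subfamily of the countable collection of all dyadic cubes; it still covers $U$; and two distinct members cannot share an interior point, since otherwise one would contain the other, contradicting the maximality of the smaller one. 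This establishes the first two bullets.

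For the third bullet, given $Q_j\cap Q_k\neq\emptyset$ I would choose $p$ in the intersection and apply the two-sided estimate in $Q_j$ and in $Q_k$:
\[
\sqrt n\,l(Q_j)\leq\dist(p,\partial U)<5\sqrt n\,l(Q_k)\quad\text{and}\quad\sqrt n\,l(Q_k)\leq\dist(p,\partial U)<5\sqrt n\,l(Q_j),
\]
so $l(Q_j)<5\,l(Q_k)$ and $l(Q_k)<5\,l(Q_j)$; since $l(Q_j)/l(Q_k)$ is an integer power of $2$, it lies in $\{\tfrac14,\tfrac12,1,2,4\}$, which is exactly the third bullet. The argument is otherwise routine, and there is no serious obstacle; the only point deserving attention is the dimension-free constant $4$ in the third bullet, which is why $c$ is calibrated so that the distance estimate over each Whitney cube carries the same factor $\sqrt n$ on both sides: that factor cancels when two adjacent cubes are compared, and the side lengths being dyadic then sharpens the crude ratio bound $5$ to $4$.
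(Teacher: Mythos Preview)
Your argument is correct and is precisely the classical Whitney construction from \cite[Chapter~6]{stein}, which is all the paper invokes: the paper states the lemma without proof and refers the reader to Stein. Your calibration $c=2\sqrt n$ is exactly what makes the pointwise two-sided bound carry the common factor $\sqrt n$, so that it cancels when comparing neighboring cubes and the dyadic constraint then forces the ratio into $\{\tfrac14,\tfrac12,1,2,4\}$; the maximal-cube step and the countability are handled cleanly.
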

A collection $\mathcal{W}(U)$ satisfying the properties of Lemma \ref{le:whitney} is called a Whitney decomposition of $U$ and the cubes $Q\in \mathcal{W}(U)$ are called Whitney cubes.

For a given bounded domain $\Omega \subset \mathbb{R}^n,$ we define $\mathcal{W}(\Omega)$ and $\mathcal{W}(\overline{\boz}^{c}).$ In order to define the value of the extension on a Whitney cube $Q$ in $W(\overline\Omega^{c}),$ we need a suitable Whitney cube $\tilde{Q}$ in $\mathcal{W}(\Omega),$ so that we can transfer the average value of the function from $\tilde{Q}$ to $Q.$ After associating the value of a function from a Whitney cube inside the domain to the Whitney cube outside the domain, we need to glue the values together to obtain smoothness. In this step, the essential tool is a partition of unity. For a Whitney decomposition $\mathcal W(\overline\boz^c)=\{Q_j\}_{j\in\mathbb N}$, there exists a corresponding class of smooth functions $\{\psi_j\}_{j\in\mathbb N}$ with the following properties:
\begin{itemize}
\item for every $x\in\overline\boz^c $, we have 
\begin{equation}\label{q1}
\sum_{j\in\mathbb N}\psi_j(x)=1;    
\end{equation}

\item for each $j\in\mathbb N$,
\begin{equation}\label{q2}
    0\leq\psi_j(x)\leq 1
\end{equation}

for every $x\in \overline\boz^c$;
\item for each $j\in\mathbb N$, we have ${\rm supp}(\psi_j)\subset \frac{17}{16}Q_j$ and 
\begin{equation}\label{q3}
|\nabla\psi_j(x)|\leq\frac{C}{l(Q_j)}
\end{equation}

for a constant $C>0$.
\end{itemize}
The collection $\{\psi_j\}_{j\in\mathbb N}$ is called a partition of unity associated to the Whitney decomposition $\mathcal W(\overline\boz^c)$.

\section{Gradient control implies norm control}
\begin{pro}\label{prop2}
    Let $1\leq q\leq p\leq \infty$. Then a bounded $(L^{1, p}, L^{1, q})$-extension domain is also a $(W^{1, p}, W^{1, q})$-extension domain.
\end{pro}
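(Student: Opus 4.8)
Let $E\colon L^{1,p}(\boz)\to L^{1,q}(\rn)$ be a bounded homogeneous extension operator, so that $Eu|_\boz\equiv u$ and $\|\nabla(Eu)\|_{L^q(\rn)}\le C\|\nabla u\|_{L^p(\boz)}$ for every $u\in L^{1,p}(\boz)$; note $W^{1,p}(\boz)\subset L^{1,p}(\boz)$, so $E$ is defined on $W^{1,p}(\boz)$. Since $\boz$ is bounded, fix $R>0$ with $\overline\boz\subset B(0,R)$, write $B:=B(0,2R)$, and choose a cut-off $\eta\in C^\fz_c(\rn)$ with $0\le\eta\le 1$, $\eta\equiv 1$ on $B(0,R)$, $\operatorname{supp}\eta\subset B$ and $|\nabla\eta|\le C/R$. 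The candidate extension operator for the inhomogeneous problem is $Fu:=\eta\cdot(Eu)$; it manifestly satisfies $Fu|_\boz\equiv u$ because $\eta\equiv 1$ on $\boz$, and it inherits the linearity of $E$ if $E$ is linear. So the whole task reduces to the norm bound $\|Fu\|_{W^{1,q}(\rn)}\le C\|u\|_{W^{1,p}(\boz)}$.

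The core step, for $q<\fz$, is to control $\|Eu\|_{L^q(B)}$. First, $Eu\in L^{1,q}(\rn)$, and by \cite{M} functions in $L^{1,q}(\rn)$ lie in $W^{1,q}_{\mathrm{loc}}(\rn)$; as $\overline B$ is compact this gives $Eu\in W^{1,q}(B)$. Since a ball supports a global $(q,q)$-Poincar\'e inequality for every $1\le q<\fz$, Lemma \ref{hlemma} applies on $B$ with the positive-measure subset $\boz\subset B$ and yields
\[
\|Eu-(Eu)_\boz\|_{L^q(B)}\le C\|\nabla(Eu)\|_{L^q(B)}\le C\|\nabla(Eu)\|_{L^q(\rn)}\le C\|\nabla u\|_{L^p(\boz)}.
\]
Because $Eu|_\boz\equiv u$ we have $(Eu)_\boz=u_\boz$, and H\"older's inequality gives $|u_\boz|\le|\boz|^{-1/p}\|u\|_{L^p(\boz)}$, hence $\|(Eu)_\boz\|_{L^q(B)}=|B|^{1/q}|u_\boz|\le C\|u\|_{L^p(\boz)}$. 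Adding the two bounds, $\|Eu\|_{L^q(B)}\le C\|u\|_{W^{1,p}(\boz)}$.

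It remains to assemble the estimate for $Fu$. From the product rule $\nabla(Fu)=\eta\,\nabla(Eu)+(Eu)\,\nabla\eta$, together with $\operatorname{supp}\eta\subset B$, $0\le\eta\le 1$ and $|\nabla\eta|\le C/R$, we get
\[
\|Fu\|_{L^q(\rn)}\le\|Eu\|_{L^q(B)},\qquad
\|\nabla(Fu)\|_{L^q(\rn)}\le\|\nabla(Eu)\|_{L^q(\rn)}+\tfrac{C}{R}\|Eu\|_{L^q(B)},
\]
and each right-hand side is $\le C\|u\|_{W^{1,p}(\boz)}$ by the previous paragraph and the boundedness of $E$, so $\|Fu\|_{W^{1,q}(\rn)}\le C\|u\|_{W^{1,p}(\boz)}$, as desired. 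For the remaining case $q=p=\fz$ the argument is analogous: $Eu\in L^{1,\fz}(\rn)$ has a Lipschitz representative with constant $\le C\|\nabla u\|_{L^\fz(\boz)}$, so convexity of $B$ and $Eu|_\boz\equiv u$ give $\|Eu\|_{L^\fz(B)}\le\|u\|_{L^\fz(\boz)}+C\diam(B)\,\|\nabla u\|_{L^\fz(\boz)}$, after which the cut-off step is identical. I do not anticipate a genuine obstacle; the only point requiring care is that the homogeneous extension controls gradients only and may grow at infinity, which is exactly why one must localize by $\eta$ and recover $L^q$-control on the fixed ball $B$ (rather than on $\boz$) via the Poincar\'e inequality.
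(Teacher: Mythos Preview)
Your proof is correct and follows essentially the same approach as the paper: both localize to a ball $B\supset\overline\boz$, invoke Lemma~\ref{hlemma} on $B$ with $A=\boz$ to control $\|Eu-u_\boz\|_{L^q(B)}$ by $\|\nabla u\|_{L^p(\boz)}$, and then handle the constant $u_\boz$ separately via H\"older. The only cosmetic difference is the final step from $B$ to $\rn$: the paper restricts $Eu$ to $B$ and then invokes the $(W^{1,q},W^{1,q})$-extension property of balls, whereas you multiply by a cut-off~$\eta$; both are standard and equally valid.
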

In this section, we prove Proposition \ref{prop2}. The proof is similar to the proof of \cite[Theorem 4.4]{HKjam}. For convenience of the readers, we still present the details here. In the proof, we will use the fact that for a ball $B$, $L^{1,p}(B)= W^{1,p}(B).$ See \cite[Section 1.1.2]{M}.
\begin{proof}[Proof of Proposition \ref{prop2}]
Let $\boz\subset\rr^n$ be a bounded $(L^{1, p}, L^{1, q})$-extension domain for $1\leq q\leq p\leq \fz$ and let $B\subset\rr^n$ be a ball with $\boz\subset\subset B$. Since $\boz$ is an $(L^{1, p}, L^{1, q})$-extension domain, for every $u\in W^{1,p}(\boz)\subset L^{1,p}(\boz)$, we find $E(u)\in L^{1,q}(\rr^n)$ with $E(u)\big|_\boz\equiv u$ and 
\begin{equation}\label{hexten}
    \|\nabla E(u)\|_{L^q(\rr^n)}\leq C\|\nabla u\|_{L^p(\boz)}
\end{equation}
where $C>0.$ By \cite[Section 1.1.2]{M}, we have $E(u)\big|_B\in W^{1, q}(B)$. Suppose first that $q<\fz$. Using Lemma \ref{hlemma} and \eqref{hexten}, we obtain
\begin{eqnarray}\label{eq:poin}
\lf(\int_B|E(u)(z)-u_\boz|^qdz\r)^{\frac{1}{q}} 
&\leq & C\left(\int_B|\nabla E(u)(z)|^qdz\right)^\frac{1}{q},
\\
&\leq & C \lf(\int_\Omega|\nabla u(z)|^pdz\r)^{\frac{1}{p}}.\nonumber
\end{eqnarray}
The triangle inequality and H\"older's inequality together with  (\ref{eq:poin}) give
\begin{eqnarray}\label{eq:3.2}
\lf(\int_B\abs{E(u)(z)}^q+\abs{\nabla E(u)(z)}^qdz\r)^{\frac{1}{q}} &=& \lf(\int_B\abs{E(u)(z)-u_{\Omega}+u_{\Omega}}^q+\abs{\nabla E(u)(z)}^qdz\r)^{\frac{1}{q}}\nonumber, \\
&\leq& C\lf(\int_\boz\abs{\nabla u(z)}^p\r)^{\frac{1}{p}}+C\lf(\frac{|B|}{|\boz|}\r)^{\frac{1}{q}}\lf(\int_\boz\abs{u(z)}^qdz\r)^{\frac{1}{q}}\nonumber,\\
&\leq& C\lf(\int_\boz\abs{u(z)}^pdz+\abs{\nabla u(z)}^pdz\r)^{\frac{1}{p}}.
\end{eqnarray}
Define an extension operator $\widetilde E$ on $W^{1, p}(\boz)$ by setting
\[\tilde E(u):=E(u)\big|_B.\]
Inequality (\ref{eq:3.2}) tells us that $\tilde E$ is a bounded extension operator from $W^{1, p}(\boz)$ to $W^{1, q}(B)$. Finally, the result that $\boz$ is a $(W^{1, p}, W^{1, q})$-extension domain comes from the fact that each ball $B$ is a $(W^{1, q}, W^{1, q})$-extension domain.

The Case $q=\fz$ follows similarly from the estimate
\begin{equation}
    \|Eu-u_{\Omega}\|_{L^{\infty}(B)}\leq C \|\nabla Eu\|_{L^{\fz}(\Omega)}.
\end{equation}
Also when $q<\fz$ and $p=\infty$, we repeat the same arguments by replacing \eqref{eq:poin} with
\begin{equation}
\lf(\int_B|E(u)(z)-u_\boz|^qdz\r)^{\frac{1}{q}} \leq C\|\nabla u \|_{L^{\fz}(\Omega)}.    
\end{equation}
\end{proof}
\section{Proof of Theorem \ref{prop1}}\label{sectionp}
In this section, we prove Theorem \ref{prop1}.
\begin{proof}[Proof of Theorem \ref{prop1}]
%First. let us consider the case $p=q$. Let $\boz\subset\rn$ be a bounded $(W^{1, p}, W^{1, p})$-extension domain and $B\subset\rn$ be a large enough ball such that $\overline\boz\subset B$. For every $1\leq p<\fz$, $W^{1, p}(B)$ can be compactly embedded into $L^p(B)$. The definition of $(W^{1, p}, W^{1, p})$-extension domain implies that for every $u\in W^{1, p}(\boz)$, there exists a function $E(u)\in W^{1, p}(B)$ with $E(u)\big|_\boz\equiv u$ and 
%\[\|E(u)\|_{W^{1, p}(\boz)}\leq C\|u\|_{W^{1, p}}\]
%with a constant $C$ independent of $u$. The fact that $\boz\subset\rn$ is a $(W^{1, p}, W^{1, p})$-extension domain implies 
%\[\|u\|_{L^p(\boz)}\leq\|E(u)\|_{L^p(B)}\leq\|E(u)\|_{W^{1, p}(B)}\leq C\|u\|_{W^{1, p}(\boz)}.\]
%Since $W^{1, p}(B)$ can be compactly embedded into $L^p(B)$, from last inequality, we obtain $W^{1, p}(\boz)$ can be compactly embedded into $L^p(\boz)$. Then \cite[Theorem 12]{SS:tams} 
%Hence, we assume $p>1$ below. 
By Proposition \ref{prop2}, for $1\leq q\leq p\leq  \infty$, a bounded $(L^{1,p},L^{1,q})$-extension domain is a $(W^{1,p},W^{1,q})$-extension domain.

Towards a partial converse, let $B\subset\rn$ be a large enough ball such that $\boz\subset \subset B$. Since $\boz$ is a $(W^{1, p}, W^{1, q})$-extension domain, for every $u\in W^{1, p}(\boz)$, there exists a function $E(u)\in W^{1, q}(B)$ such that $E(u)\big|_{\boz}\equiv u$ and 
\begin{equation}\label{eq:extension1}
\|E(u)\|_{W^{1, q}(B)}\leq C\|u\|_{W^{1,p}(\boz)},
\end{equation}
where $C>0.$ Let $\{u_j\}$ be a bounded sequence in $W^{1,p}(\Omega)$. By \eqref{eq:extension1}, $\{E(u_j)\}$ is also a bounded sequence in $W^{1,q}(B)$. Since $1\leq q\leq p<q^\star$, we can use the Rellich-Kondrachov theorem to conclude that there is a subsequence which converges in $L^p(B)$ to some $v\in L^p(B)$. Especially, this subsequence converges to $v|_{\Omega}$ in $L^p(\Omega)$. Hence $W^{1,p}(\Omega)$ is compactly embedded in $L^p(\Omega)$. Then, by using Theorem \eqref{holder space}, we conclude that, for every $u\in W^{1, p}(\boz)$, we have 
\begin{equation}\label{eq:poincare}
\int_\boz|u(x)-u_\boz|^pdx\leq C\int_\boz|\nabla u(x)|^pdx
\end{equation}
where $C>0$ is independent of $u$.

Fix $u\in L^{1, p}(\boz).$ For every $n\in\mathbb N$, define a function $u_n$ on $\boz$ by setting
 
\[u_n(x):=\min\left\{n, \max\left\{-n, u(x)\right\}\right\}.\]
Since $|\Omega|< \infty$ and $ u\in L^{1}_{loc}(\Omega)$, by using an exhaustion argument we get that for some $m_0>0$ it holds that
$$ |\{ x\in \Omega: |u(x)| \leq m_0\}|\geq \frac{|\Omega|}{2}.$$
Set
\begin{equation*}
   A := \{ x\in \Omega: |u(x)| \leq m_0\}. 
\end{equation*}
 For $m,n \geq m_0,$  $u_m -u_n =0$ on $A$ and $u_m -u_n \in W^{1,p}(\Omega).$ If $v\in W^{1,p}(\Omega),$ then by using Lemma \ref{hlemma}, we get that
\begin{eqnarray}\label{p estimate}
    \int_{\Omega}|v(x) - v_{A}|^p dx &\leq &C \int_{\Omega}|\nabla v(x)|^p dx.
\end{eqnarray}
By using inequality \eqref{p estimate} for $v=u_m-u_n$, we conclude that
\begin{equation*}
    \int_{\Omega}|u_m(x) - u_n(x)|^p dx \leq C\left(\underset {\{x\in \Omega : |u_m(x)| \geq m_0\}} \int
|\nabla u_m(x)|^p dx + \underset {\{x\in \Omega: |u_m(x)| \geq m_0\}} \int |\nabla u_n(x)|^p dx\right).
\end{equation*}
This yields
\begin{equation*}
    \|u_m -u_n\|_{W^{1,p}(\Omega)}^p \leq C\left(\underset {\{x\in \Omega: |u_m(x)| \geq m_0\}} \int
|\nabla u_m(x)|^p dx + \underset {\{x\in \Omega: |u_m(x)| \geq m_0\}} \int |\nabla u_n(x)|^p dx\right).
\end{equation*}
Hence, $\{u_m\}$ is a Cauchy sequence in $W^{1,p}(\Omega)$ and by definition $\{u_m\}$ converges point-wise to $u$ in $\Omega$, which implies that  $u \in W^{1,p}(\Omega).$  
 So, for $u\in L^{1,p}(\Omega),$ we also have $(u-u_{\Omega})\in W^{1,p}(\Omega).$ We define a function on $B$ by setting
$$T(u):=E(u-u_\boz)+u_\boz.$$ 
Then $T(u)\in W^{1, q}(B)$ with $T(u)\big|_\Omega\equiv u$. By using inequality \eqref{eq:poincare} and the fact $\boz$ is a $(W^{1, p}, W^{1, q})$-extension domain, we have 
\begin{equation}\label{exdomain}
    \|\nabla T(u)\|_{L^q(B)}=\|\nabla E(u-u_\boz)\|_{L^q(B)}\leq \|u-u_{\boz}\|_{W^{1, p}(\boz)}\leq C\|\nabla u\|_{L^p(\boz)}.
\end{equation}
Inequality \eqref{exdomain} and the fact that $B$ is an $(L^{1,q}, L^{1,q})$-extension domain show that $\boz$ is an $(L^{1, p}, L^{1, q})$-extension domain.
\end{proof}
The formulation of Theorem \ref{prop1} does not allow for the case $p=\infty$. Towards this case we first establish a preliminary result.
\begin{lem}\label{Lem}
    Let $\Omega$ be a bounded $(W^{1,\infty}, W^{1,q})$-extension domain with $q>n$. Then there exists a constant $C$ so that
    \begin{equation}
        |u(x)-u(y)|\leq C\|\nabla u \|_{L^{\infty}(\Omega)}
    \end{equation}
    for all $x,y \in \Omega$ and every continuous $u\in W^{1,\infty}(\Omega)$.
\end{lem}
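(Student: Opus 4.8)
The plan is to reduce the asserted estimate to the $p=\fz$ analogue of a global Poincar\'e inequality on $\boz$, namely
\[
\|u-u_\boz\|_{L^\infty(\boz)}\le C\,\|\nabla u\|_{L^\infty(\boz)}\qquad\text{for every }u\in W^{1,\infty}(\boz),
\]
and then to obtain this inequality from a compact Sobolev embedding, following the scheme of the proof of Theorem \ref{prop1} but replacing the embedding into $L^p$ by the embedding into continuous functions, which is at our disposal since $q>n$. Granting the displayed inequality, the lemma is immediate: for a continuous $u\in W^{1,\infty}(\boz)$ and $x,y\in\boz$ one has
\[
|u(x)-u(y)|\le|u(x)-u_\boz|+|u_\boz-u(y)|\le 2\|u-u_\boz\|_{L^\infty(\boz)}\le 2C\|\nabla u\|_{L^\infty(\boz)},
\]
because the pointwise values of a continuous function on an open set are bounded by its essential supremum. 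Note that $W^{1,\infty}(\boz)\subset L^\infty(\boz)$ by definition, so $u_\boz$ is meaningful, and that continuity of $u$ is used only here, to pass from an $L^\infty$-bound to a genuine pointwise bound.

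First I would fix a ball $B$ with $\boz\subset\subset B$ and, composing the given $(W^{1,\infty},W^{1,q})$-extension operator with restriction to $B$, obtain a bounded operator $E\colon W^{1,\infty}(\boz)\to W^{1,q}(B)$ satisfying $E(u)|_\boz=u$. Since $q>n$, Morrey's inequality together with the Arzel\`a--Ascoli theorem (equivalently, the Rellich--Kondrachov theorem) yields a compact embedding $W^{1,q}(B)\hookrightarrow C(\overline B)$, and composing with the bounded restriction $C(\overline B)\to L^\infty(\boz)$ gives a compact map $W^{1,q}(B)\to L^\infty(\boz)$. Now the inclusion $W^{1,\infty}(\boz)\hookrightarrow L^\infty(\boz)$ factors through these maps as $u\mapsto E(u)\mapsto E(u)|_\boz=u$, and a bounded operator followed by a compact one is compact; hence bounded sequences in $W^{1,\infty}(\boz)$ are precompact in $L^\infty(\boz)$.

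Next I would deduce the displayed inequality from this compactness by the standard normalisation-and-contradiction argument, which is the $p=\fz$ counterpart of Theorem \ref{holder space}. If the inequality failed, there would exist $u_k\in W^{1,\infty}(\boz)$ with $\|u_k-(u_k)_\boz\|_{L^\infty(\boz)}>k\,\|\nabla u_k\|_{L^\infty(\boz)}$; in particular each $u_k$ is non-constant, so $v_k:=(u_k-(u_k)_\boz)/\|u_k-(u_k)_\boz\|_{L^\infty(\boz)}$ is well defined, is bounded in $W^{1,\infty}(\boz)$, and satisfies $\|v_k\|_{L^\infty(\boz)}=1$, $(v_k)_\boz=0$ and $\|\nabla v_k\|_{L^\infty(\boz)}\to 0$. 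By the compactness just proved, some subsequence converges in $L^\infty(\boz)$ to a function $v$ with $\|v\|_{L^\infty(\boz)}=1$ and $v_\boz=0$; since $v_k\to v$ and $\nabla v_k\to 0$ in $\mathcal D'(\boz)$, we get $\nabla v\equiv 0$, so $v$ is constant on the connected domain $\boz$ and therefore $v\equiv 0$, contradicting $\|v\|_{L^\infty(\boz)}=1$.

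The argument is essentially routine, and the only point I would emphasise is that one really has to pass through compactness. A direct application of Morrey's inequality to $E(u)$ on $B$ merely gives $\|u-u_\boz\|_{L^\infty(\boz)}\lesssim\|\nabla E(u)\|_{L^q(B)}\lesssim\|u\|_{W^{1,\infty}(\boz)}=\|u\|_{L^\infty(\boz)}+\|\nabla u\|_{L^\infty(\boz)}$, and the unwanted term $\|u\|_{L^\infty(\boz)}$ cannot be absorbed, since the constant produced by the extension operator need not be small. The compactness argument is exactly what eliminates this term. Apart from that, the only bookkeeping needing attention is the factorisation in the second step and the use that $\boz$ is connected and of finite measure.
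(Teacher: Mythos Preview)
Your argument is correct and complete, but it follows a genuinely different route from the paper's proof. The paper proceeds by a direct local-to-global argument: given $x_0,y_0\in\boz$, it normalises so that $u(x_0)=1$, $u(y_0)=0$, truncates to force $\|u\|_{L^\infty(\boz)}\le 1$, and then applies Morrey's inequality to $E(u)$ to obtain
\[
1\le C|x_0-y_0|^{1-n/q}\bigl(1+\|\nabla u\|_{L^\infty(\boz)}\bigr).
\]
For $|x_0-y_0|$ below an explicit threshold $\delta$ this absorbs the constant term and yields $|u(x_0)-u(y_0)|\le C\|\nabla u\|_{L^\infty(\boz)}$; the general case then follows by chaining points along a curve in $\boz$ so that consecutive points are within distance~$\delta$. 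Thus the paper eliminates the unwanted $\|u\|_{L^\infty}$ term by a pointwise normalisation and truncation trick rather than by compactness.

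Your approach, by contrast, establishes the global $L^\infty$-Poincar\'e inequality $\|u-u_\boz\|_{L^\infty(\boz)}\le C\|\nabla u\|_{L^\infty(\boz)}$ via the compact embedding $W^{1,\infty}(\boz)\hookrightarrow L^\infty(\boz)$, which you obtain by factoring through $W^{1,q}(B)\hookrightarrow C(\overline B)$. This mirrors the strategy of Theorem~\ref{prop1} and Theorem~\ref{holder space} exactly, so it makes the treatment of $p=\infty$ fully parallel to the case $p<q^\star$. The paper's argument is more elementary and constructive (an explicit $\delta$, an explicit chain bound depending on $\diam\boz$), while yours is cleaner functional analysis and avoids the chaining step entirely; both rely on $\boz$ being bounded and connected.
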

\begin{proof}
    Fix a continuous $u\in W^{1,\infty}(\Omega)$ and $x_{0},y_{0}\in \Omega$. We may assume that $u(x_{0})=1 $ and $ u(y_{0})=0$. By truncation, we may further assume that $0\leq u(x)\leq 1$ on $\Omega$. Then $\|u\|_{L^{\infty}(\Omega)}\leq 1$. Let $B\subset\mathbb{R}^n$ be a ball such that $\overline{\Omega}\subset B$.  Since $\Omega$ is a bounded $(W^{1,\infty},W^{1,q})$-extension domain with $q>n$, there exists a function $E(u)\in W^{1,q}(B)$ such that $E(u)|_{\Omega}\equiv u$ and 
    \begin{equation}\label{eq4.6}
       \|E(u)\|_{W^{1,q}(B)}\leq C \|u\|_{W^{1,\infty}(\Omega)}
    \end{equation}
    where $C>0$.\\
    By Morrey's inequality and \eqref{eq4.6}, we have
    \begin{eqnarray}\label{morrey}
         |u(x_{0})-u(y_{0})|&\leq& C|x_{0}-y_{0}|^{1-\frac{n}{q}}\|\nabla E(u)\|_{L^q(B)},\\ \nonumber
         &\leq&C|x_{0}-y_{0}|^{1-\frac{n}{q}}(\|u\|_{L^{\infty}(\Omega)}+\|\nabla u\|_{L^{\infty}(\Omega)}).
    \end{eqnarray}
    By using \eqref{morrey} and our normalization, we get that
    \begin{equation}\label{morrey2}
        1\leq C|x_0-y_0|^{1-\frac{n}{q}}+ C|x_0-y_0|^{1-\frac{n}{q}}\|\nabla u\|_{L^{\infty}(\Omega)}.
    \end{equation}
Define $\delta >0$ by the equation
\begin{equation}
    1-C\delta^{1-\frac{n}{q}}=\frac{1}{2}.
\end{equation}
    By using \eqref{morrey2}, we get that
    \begin{equation}\label{1equ}
        1\leq \|\nabla u\|_{L^{\infty}(\Omega)},
    \end{equation}
     whenever $|x_0-y_0|<\delta$.\\
     Hence \eqref{1equ} gives
\begin{equation}\label{mresu}
     1=|u(x_0)-u(y_0)|\leq C\|\nabla u\|_{L^{\infty}(\Omega)}.
\end{equation}
under the assumption that $|x_0-y_0|<\delta$.\\

Fix $j$ such that $\sqrt{n}2^{-j}\leq \delta$ and consider a dyadic decomposition  $\mathcal{D}$ of $\mathbb{R}^n$ into closed cubes of edge length $2^{-j}$. Since $\boz\subset\mathbb R^n$ is bounded, there are only finitely many cubes, say $M\in \mathbb{N}$ in $\mathcal D$ which intersect $\boz$. Let $x, y\in\boz$ be arbitrary and $\gamma\subset\boz$ be a locally rectifiable curve joining $x$ and $y$. There exists a finite class of dyadic cubes $\{Q_j\}_{j=1}^N\subset\mathcal D$ which intersect $\gamma$ such that $x\in Q_1$ and $y\in Q_N$ with $N\leq M$. Set $x_0:=x$ and $x_{N}:=y$. Choose $x_1$ to be the last point on along the curve $\gamma$ which is contained in the union of the cubes $Q_j$ containing $x_0$. We continue with $x_1$ taking the role of $x_0$. By repeating this argument (at most $\hat{N}$ times), we obtain $\{x_0=x,x_1,x_2,\cdots,x_{\hat{N}}\}, \hat{N}\leq N$ with $x_j\in \Omega$ and so that 
\begin{equation}
    |x_{j}-x_{j+1}|\leq \sqrt{n}2^{-j}\leq \delta.
\end{equation}
Hence \eqref{mresu} together with triangle inequality gives
\begin{equation}
        |u(x)-u(y)|\leq C\|\nabla u \|_{L^{\infty}(\Omega)}
    \end{equation}
for all $x,y \in \Omega$. 
\end{proof}
From Lemma \ref{Lem} and the proof of Theorem \ref{prop1}, we obtain the following conclusion.
\begin{pro}\label{prop3}
    Let $q>n$. Then a bounded $(W^{1,\infty}, W^{1,q})$-extension domain $\Omega \subset \mathbb{R}^n$ is also an $(L^{1,\infty},L^{1,q})$-extension domain.
\end{pro}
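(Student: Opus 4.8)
The plan is to follow the proof of Theorem \ref{prop1} with $p=\infty$, using Lemma \ref{Lem} in place of the Rellich--Kondrachov/Poincar\'e step, which is unavailable when $p=\infty$. Let $\Omega$ be a bounded $(W^{1,\infty}, W^{1,q})$-extension domain with $q>n$, let $B$ be a ball with $\overline\Omega\subset B$, and let $E\colon W^{1,\infty}(\Omega)\to W^{1,q}(B)$ be the associated bounded extension operator (obtained by restricting to $B$). Fix $u\in L^{1,\infty}(\Omega)$. Since $\nabla u\in L^\infty(\Omega)$, the function $u$ agrees almost everywhere with a function that is locally Lipschitz, hence continuous, on $\Omega$; we work with this representative.

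First I would show that $u$ actually lies in $W^{1,\infty}(\Omega)$. For $k\in\mathbb N$ put $u_k:=\min\{k,\max\{-k,u\}\}$; each $u_k$ is continuous, belongs to $W^{1,\infty}(\Omega)$, and satisfies $\|\nabla u_k\|_{L^\infty(\Omega)}\le\|\nabla u\|_{L^\infty(\Omega)}$. Applying Lemma \ref{Lem} to $u_k$ gives $|u_k(x)-u_k(y)|\le C\|\nabla u\|_{L^\infty(\Omega)}$ for all $x,y\in\Omega$, with $C$ depending only on $\Omega$ and $q$. As in the proof of Theorem \ref{prop1}, an exhaustion argument produces $m_0>0$ such that $A:=\{x\in\Omega:|u(x)|\le m_0\}$ has $|A|\ge|\Omega|/2$; since $u_k=u$ on $A$ whenever $k\ge m_0$, averaging the previous bound over $y\in A$ yields $|u_k(x)-u_A|\le C\|\nabla u\|_{L^\infty(\Omega)}$ for every $x\in\Omega$ and every $k\ge m_0$, where $u_A=\frac{1}{|A|}\int_A u$ is finite. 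Letting $k\to\infty$ (so that $u_k\to u$ pointwise) gives $\|u\|_{L^\infty(\Omega)}\le|u_A|+C\|\nabla u\|_{L^\infty(\Omega)}<\infty$, hence $u\in W^{1,\infty}(\Omega)$.

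Now Lemma \ref{Lem} applies to $u$ itself, and averaging the resulting bound $|u(x)-u(y)|\le C\|\nabla u\|_{L^\infty(\Omega)}$ over $y\in\Omega$ shows that $\|u-u_\Omega\|_{W^{1,\infty}(\Omega)}\le C\|\nabla u\|_{L^\infty(\Omega)}$. From here the proof of Theorem \ref{prop1} carries over verbatim: setting $T(u):=E(u-u_\Omega)+u_\Omega$ we obtain $T(u)\in W^{1,q}(B)\subset L^{1,q}(B)$ with $T(u)\big|_\Omega\equiv u$ and
\[
\|\nabla T(u)\|_{L^q(B)}=\|\nabla E(u-u_\Omega)\|_{L^q(B)}\le\|E(u-u_\Omega)\|_{W^{1,q}(B)}\le C\|u-u_\Omega\|_{W^{1,\infty}(\Omega)}\le C\|\nabla u\|_{L^\infty(\Omega)}.
\]
Composing $T$ with a bounded extension operator $L^{1,q}(B)\to L^{1,q}(\mathbb R^n)$, which exists because the ball $B$ is an $(L^{1,q},L^{1,q})$-extension domain, produces a bounded extension operator from $L^{1,\infty}(\Omega)$ to $L^{1,q}(\mathbb R^n)$, so that $\Omega$ is an $(L^{1,\infty},L^{1,q})$-extension domain.

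I expect the main obstacle to be the reduction in the second paragraph: one must verify that a function in $L^{1,\infty}(\Omega)$, which a priori could be unbounded, is in fact forced into $W^{1,\infty}(\Omega)$ by the extension property, so that Lemma \ref{Lem} is applicable. The care there lies in selecting the continuous representative and in checking that the constant furnished by Lemma \ref{Lem} is uniform in both $k$ and $u$ (the quantity $u_A$ depends on $u$, but only as a harmless additive shift). The remainder is precisely the bookkeeping already done for Theorem \ref{prop1}, specialized to $p=\infty$.
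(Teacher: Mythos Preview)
Your proposal is correct and follows the same approach as the paper: use Lemma \ref{Lem} to obtain the $L^\infty$-Poincar\'e inequality $\|u-u_\Omega\|_{L^\infty(\Omega)}\le C\|\nabla u\|_{L^\infty(\Omega)}$, then set $T(u)=E(u-u_\Omega)+u_\Omega$ exactly as at the end of the proof of Theorem \ref{prop1}. The paper compresses the reduction from $L^{1,\infty}(\Omega)$ to $W^{1,\infty}(\Omega)$ into the phrase ``as at the end of the proof of Theorem \ref{prop1}'', whereas you spell it out via a direct uniform bound on the truncations $u_k$ (which is in fact the cleaner route for $p=\infty$, since the Cauchy-sequence version used for finite $p$ does not transfer well to the sup norm); this is a minor expository difference, not a different method.
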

\begin{proof}
    Let $\Omega\subset \mathbb{R}^n$ be a bounded $(W^{1,\infty},W^{1,q})$-extension domain with $q>n$. Then by Lemma \ref{Lem}, there exists a constant $C$ such that
\begin{equation}
        |u(x)-u(y)|\leq C\|\nabla u \|_{L^{\infty}(\Omega)}
\end{equation}
    for all $x,y \in \Omega$ and every continuous $u\in W^{1,\infty}(\Omega)$.\\

  Hence for $u\in W^{1,\infty}(\Omega)$, by considering a continuous representative of $u$, we conclude that
    \begin{equation}\label{meq}
        \|u-u_{\Omega}\|_{L^{\infty}(\Omega)}\leq C\|\nabla u\|_{L^{\infty}(\Omega)},
    \end{equation}
    where $C>0$ is independent of $u$.\\

Since every $u\in W^{1,\infty}(\Omega)$ satisfies \eqref{meq}, we conclude that $\Omega$ is a $(L^{1,\infty}, L^{1,q})$-extension domain by again defining $Tu:= E(u-u_\Omega)+u_\Omega$ as at the end of the proof of Theorem \ref{prop1}.
    
\end{proof}
%\section{Proof of Theorem \ref{th:count}}
\section{Proof of Theorem \ref{th:count}}
Let $n\geq 3$. In this section, we construct a domain $\boz\subset\rn$ which is a $(W^{1, p}, W^{1, q})$-extension domain for every $1\leq q< q^\star<p<\fz$, but which is not an $(L^{1, p}, L^{1, q})$-extension domain for any $1\leq q<p\leq n$. 

\subsection{The construction of the domain}\label{domain}
Let $\mathcal Q_0:=[0, 20]\times[0, 1]^{n-1}\subset\rn$ be a closed rectangle in the Euclidean space $\rn$. Let $\mathscr Q_0:=[0, 20]\times[0, 1]^{n-2}\times\{0\}$ be the lower $(n-1)$-dimensional face of the rectangle $\mathcal Q_0$. We define a sequence of points $\{z_k\} _{k\in\mathbb{N}}$ in $\mathscr Q_0$ by setting
\[z_1:=\left(1, \frac{1}{2}, \cdots, \frac{1}{2}, 0\right)\ {\rm and}\ z_k:=\lf(1+30\sum_{j=2}^k 2^{-j}, \frac{1}{2}, \cdots, \frac{1}{2}, 0\r)\]
for $2\leq k<\fz$.

Let $x=(x_1, x_2,\cdots, x_n)\in\rn$. Set $\check{x}:=(x_1, x_2, \cdots, x_{n-1}, 0)\in\rr^{n-1}\times\{0\}$. Then $\check{x}$ is the projection of $x$ into $(n-1)$-dimensional hyperplane $\rr^{n-1}\times\{0\}$. Let $k\in\mathbb N$ and $0<l_k<2^{-k-3},$ whose precise value will be chosen later. We define two trapezoidal cones $\mathscr C_k^+$ and $\mathscr C_k^-$ by setting
$$\mathscr{C}_k^{+}:=\lf\{x\in\rn: 0\leq|\check x-z_k|\leq\lf(1-2^{(k+1)}l_k\r)x_n+2^{-k-1}, -2^{-k-1}\leq x_n\leq 0\r\},$$
and
\begin{eqnarray}
\mathscr{C}_{k}^{-}:=\lf\{x\in\rn: 0\leq |\check x-z_k|\leq\lf(l_k 2^{(k+1)}-1\r)x_n+2l_k-2^{-k-1}, -2^{-k}\leq x_n\leq -2^{-k-1}\r\}\nonumber,
\end{eqnarray}
respectively. We further define a double cone $\mathscr C_k$ by setting 
\[\mathscr C_k:=\mathscr C_k^+\cup\mathscr C_k^-.\]
Here $\mathscr C_k$ can be viewed as an hourglass.

Let $k\in\mathbb N$. We define an $(n-1)$-dimensional hyperplane $P_k$ by setting 
\[P_k:=\lf\{x=(x_1, x_2, \cdots, x_n)\in\rn: x_n=-2^{-k-1}\r\},\]
and a cylinder $ C_k$ (which contains $\mathscr C_k$) by setting
\[ C_k:=\lf\{x\in\rn: 0\leq|\check x-z_k|\leq 2^{-k-1}, -2^{-k}\leq x_n\leq 0\r\}.\]
We define the domain $\boz$ to be the interior of the set 
\[F:=\mathcal Q_0\cup\bigcup_{k=1}^\fz\mathscr C_k\]
and the domain $\boz_1$ to be the interior of the set 
\[F_1:=\mathcal Q_0\cup\bigcup_{k=1}^\fz C_k.\]
That is $\boz= \mathring{F}$ and $\boz_1=\mathring F_1$. The following picture roughly explains the construction of the domain $\boz$.
\begin{figure}[htbp]
\centering
\includegraphics[width=0.6\textwidth]
{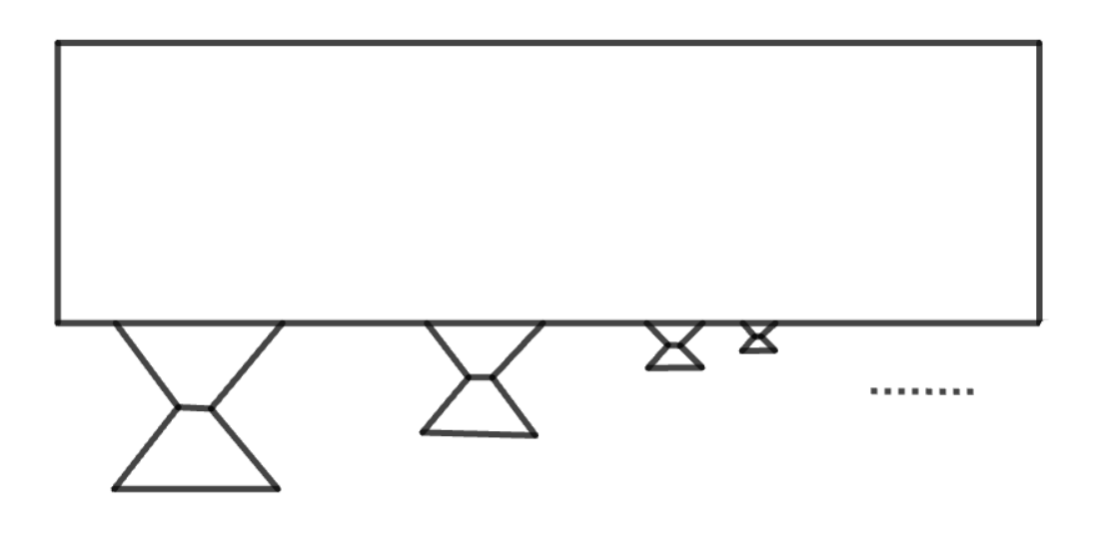}\label{fig:boz}
\caption{The domain $\boz$}
\end{figure}

\subsection{A Whitney-type extension operator}
Let $\mathcal W(F^c)$ be a Whitney decomposition of the complement of $F$. For $k\in\mathbb N$, define $\mathcal W^1_k$ to be a subclass of $\mathcal W(F^c)$ by setting
\[\mathcal W^1_k:=\lf\{Q\in\mathcal W(F^c): Q\cap C_k\neq\emptyset\r\},\]

and define $\mathcal W^2_k$ to be a further subclass of $\mathcal W(F^c)$ by setting
 \[\mathcal W^2_k:=\lf\{\tilde Q\in\mathcal W(F^c):\ {\rm there\ exists}\  Q\in\mathcal W_k^1, \text{such that} \, \,\tilde Q\cap Q\neq\emptyset\r\}.\]
Obviously, $\mathcal W^1_k$ is a subclass of $\mathcal W^2_k$. For arbitrary $k\neq j$, since the distance between $C_k$ and $C_j$ is large enough, we have 
\[\mathcal W^2_k\cap\mathcal W^2_j=\emptyset,\] 
and for each $Q\in\mathcal W^1_k$, we have 
\[\dist(Q, \boz)=\dist(Q, \mathscr C_k).\] 
Let $\mathcal W(\boz)$ be a Whitney decomposition of the domain $\Omega$. By the geometry of the domain $\boz$, for each cube $Q\in\mathcal W^2_k$, we can choose a cube $Q^\star\in\mathcal W(\boz)$ such that $Q^\star\cap\mathscr C_k\neq\emptyset$, the centers of $Q$ and $Q^\star$ are on the same side of the hyperplane $P_k$,
\begin{equation}\label{eq6:1}
\dist(Q, Q^\star)\leq C_1l(Q),
\end{equation}
\begin{equation}\label{eq6:2}
\frac{1}{C_2}\leq\frac{l(Q)}{l(Q^\star)}\leq C_2,
\end{equation}
and 
\begin{equation}\label{eq6:3}
\sum_{Q\in\bigcup_k\mathcal W^1_k}\chi_{Q^\star}(x)\leq C_3
\end{equation}
with constants $C_1, C_2, C_3$ independent of $k$ and $Q$. The Whitney cube $Q^\star$ is called a reflected cube of the Whitney cube $Q$.  Now, we are ready to define the desired Whitney-type extension operator. Let $\{\psi_Q\}$ be a partition of unity associated to the Whitney decomposition $\mathcal W( F^c)$.
\begin{defn}\label{de:Whitney}
We define a Whitney-type extension operator $E_W$ on $L^1(\boz)$ by setting 
\begin{equation}\label{eq:Whitney}
E_W(u)(x):=\begin{cases}
u(x), \ \ &\ {\rm if}\ x\in \boz,\\
\sum_{Q\in\bigcup_k\mathcal W^2_k}u_{Q^\star}\cdot\psi_Q(x), \ \ &\ {\rm if}\ x\in\boz_1\setminus F,\\
0, \ \ &\ {\rm if}\ x\in\boz_1\cap\partial\boz.
\end{cases}
\end{equation}
\end{defn}
We will show that the Whitney-extension operator $E_W$ defined above is a bounded linear extension operator from $W^{1, p}(\boz)$ to $W^{1, q}(\boz_1)$ for all $1\leq q < q^\star<p<\infty.$ To do this, we invoke the properties of Whitney cubes. Let $\{Q_i^{\star}\}_{i=1}^m$ be a collection of Whitney cubes such that a face of $Q^{\star}_i$ is contained in a face of $Q_{i+1}^{\star}$, or vise versa,
for every $1\leq i\leq m-1$. We then say that $\{Q_i^{\star}\}_{i=1}^m$ is a chain connecting $Q_1^{\star}$ to $Q_m^{\star}$.
We define the length of the chain to be the integer $m$. The nice geometry of the domain $\boz$ gives the following observations.
\begin{itemize}
\item Let $k\in \mathbb{N}$ and $0<c<1$. If, for $Q_1, Q_2\in\mathcal W^2_k$ 
\[l(Q_1)\leq cl_k\ \ {\rm and}\ \ Q_1\cap Q_2\neq\emptyset,\]
then there exists an integer $M_1>1$ and a chain $\{Q_i^{\star}\}_{i=1}^m$ which connects $Q_1^\star$ and $Q_2^\star$ with length at most $M_1$.

\item Let $k\in\mathbb N$. Let $Q_1, Q_2\in\mathcal W^2_k$ such that
\[Q_1\cap Q_2\neq\emptyset\] 
and  the centers of $Q_1^\star$ and $Q_2^\star$ are contained on the same side of $\mathbb R^n\setminus P_k$. Then there exists an integer $M_2>1$ and a chain $\{Q_i^{\star}\}_{i=1}^m$ which connects $Q_1^\star$ and $Q_2^\star$ with length at most $M_2$. 
\end{itemize}
%Let $\Gamma(Q,\tilde{Q})\subset \mathcal{W}(\Omega)$ be a chain of Whitney cubes connecting $Q$ and $\tilde{Q}.$
Fix 
$$M:=\max\{M_1, M_2\}.$$ 
Let $Q\in\mathcal W^1_k$. Define $\mathcal W_Q$ to be the subclass of $\mathcal W^2_k$ which contains all the Whitney cubes $\tilde Q\in\mathcal W^2_k$ such that 
\[\tilde Q\cap Q\neq\emptyset.\]
Define
$$\mathcal W_Q^1 :=\{ \tilde{Q}\in \mathcal{W}_{Q}:\exists \hspace{1.5mm} \text{a chain}\hspace{1mm} \{Q_i^{\star}\}_{i=1}^m \hspace{0.5mm}\text{ connecting } \hspace{1mm} Q^{\star} \hspace{1.5mm}\text{to} \hspace{1.5mm} \tilde{Q^\star} \hspace{2mm} \text{with length at most $M$}\},$$
and
$$\mathcal W_Q^2:=\mathcal W_Q\setminus\mathcal W_Q^1.$$
For each $\tilde Q\in\mathcal W_Q^1$, we fix a chain $\{Q_i^{\star}\}_{i=1}^m$ as above and define
$$\tilde{\Gamma}(Q^\star, \tilde Q^\star):=\bigcup_{i=1}^{m} Q_i^{\star}.$$
By the properties of Whitney cubes, we have the following lemma.
\begin{lem}\label{le:overlap}
Let $\tilde Q\in\mathcal W_Q^1$. There exists $C>0$ such that for every $S_1, S_2\in \{Q_i^{\star}\}_{i=1}^m$ , we have 
\begin{equation}\label{eq:size}
\frac{1}{C}\leq\frac{l(S_1)}{l(S_2)}\leq C
\end{equation}
and
\begin{equation}\label{eq:overlap}
\sum_{Q\in\bigcup_{k}\mathcal W^1_k}\sum_{\tilde Q\in\mathcal W^1_Q}\chi_{\tilde\Gamma(Q^\star, \tilde Q^\star)}(x)\leq C.
\end{equation}

\end{lem}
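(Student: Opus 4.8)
The plan is to prove the two claims separately. The bound \eqref{eq:size} is immediate from the Whitney structure, whereas \eqref{eq:overlap} is the familiar ``uniformly bounded chains of locally comparable cubes have bounded overlap'' estimate, here powered by the reflection properties \eqref{eq6:1}--\eqref{eq6:3} and the two geometric observations on short chains recorded above.

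For \eqref{eq:size}: two consecutive members $Q_i^\star, Q_{i+1}^\star$ of a chain share part of a face, so $Q_i^\star\cap Q_{i+1}^\star\neq\emptyset$, and the third item of Lemma \ref{le:whitney} gives $\tfrac14 l(Q_{i+1}^\star)\leq l(Q_i^\star)\leq 4 l(Q_{i+1}^\star)$. Since every chain under consideration has length at most $M=\max\{M_1,M_2\}$, a fixed integer determined by the geometry of $\boz$, iterating this across at most $M-1$ consecutive pairs yields $4^{-(M-1)}\leq l(S_1)/l(S_2)\leq 4^{M-1}$ for all $S_1,S_2\in\{Q_i^\star\}_{i=1}^m$. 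Taking $C:=4^{M-1}$ proves \eqref{eq:size}, with $C$ independent of $k$, $Q$ and $\tilde Q$.

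For \eqref{eq:overlap}: I would fix $x\in\boz$ (if $x\notin\boz$ the sum is zero, since each $\tilde{\Gamma}(Q^\star,\tilde Q^\star)$ is a union of Whitney cubes of $\boz$) and bound, uniformly in $x$, the number of pairs $(Q,\tilde Q)$ with $Q\in\bigcup_k\mathcal W_k^1$, $\tilde Q\in\mathcal W_Q^1$ and $x\in\tilde{\Gamma}(Q^\star,\tilde Q^\star)$. Fix one cube $S\in\mathcal W(\boz)$ with $x\in S$; any Whitney cube of $\boz$ containing $x$ meets $S$, hence has side length comparable to $l(S)$. Now suppose $x\in\tilde{\Gamma}(Q^\star,\tilde Q^\star)=\bigcup_{i=1}^m Q_i^\star$. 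Then $x$ lies in one of the chain cubes $Q_{i_0}^\star$, so $l(Q_{i_0}^\star)$ is comparable to $l(S)$, and \eqref{eq:size} then makes $l(Q^\star)$ comparable to $l(S)$ too; moreover, since the chain is connected and has at most $M$ members each of side length comparable to $l(S)$, summing diameters along the chain gives $\dist(Q^\star,S)\leq M\sqrt n\max_i l(Q_i^\star)\leq C l(S)$.

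Thus $Q^\star$ ranges over the Whitney cubes of $\boz$ whose side length is comparable to $l(S)$ and which lie within distance $C l(S)$ of $S$; these are all contained in a fixed dilate of $S$ and have pairwise disjoint interiors, so a volume count bounds their number by a constant $N_1=N_1(n)$. For each admissible $Q^\star$, property \eqref{eq6:3} allows at most $C_3$ cubes $Q\in\bigcup_k\mathcal W_k^1$ with that reflected cube, and for each such $Q$ the comparable-size property of the Whitney cubes of $F^c$ bounds by $N_2=N_2(n)$ the number of cubes of $\mathcal W(F^c)$ meeting $Q$ --- in particular the number of admissible $\tilde Q\in\mathcal W_Q^1$. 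Hence there are at most $N_1 C_3 N_2$ relevant pairs $(Q,\tilde Q)$, which is \eqref{eq:overlap}. I do not expect a genuine obstacle here; the only point requiring care is to keep all constants ($C$, $N_1$, $N_2$) dependent at most on $n$ and on the fixed data $C_1,C_2,C_3,M$, so that the final bound is truly independent of $x$.
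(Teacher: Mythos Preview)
Your argument is correct and is exactly the standard Jones-style computation that the paper has in mind. Note, though, that the paper itself does not actually prove this lemma: it simply states the result and refers the reader to Jones \cite{Jones:acta}, remarking that although $\boz$ is not an $(\epsilon,\delta)$-domain, the upper and lower halves of each hourglass are, so Jones's chain construction and overlap estimates transfer. Your write-up supplies precisely the details that such a reference is meant to invoke --- the iteration of the Whitney neighbor-ratio $4$ along a chain of length $\le M$ for \eqref{eq:size}, and for \eqref{eq:overlap} the localization to a fixed Whitney cube $S\ni x$, the comparability and distance bound forcing $Q^\star$ into a bounded dilate of $S$, the volume count for $N_1$, the use of \eqref{eq6:3} to bound the multiplicity of $Q\mapsto Q^\star$, and the neighbor count $N_2$ for $\tilde Q\in\mathcal W_Q$. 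One small remark: the Whitney cubes are closed with pairwise disjoint interiors, so strictly speaking a boundary point $x$ may lie in several of them; this only multiplies your bound by a fixed dimensional constant and does not affect the argument.
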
 

We refer the interested readers to the paper \cite{Jones:acta} by Jones. In that paper, Jones constructed  reflected cubes and uniformly bounded chains to connect them in $(\epsilon, \delta)$-domains. Our domain $\boz$ is not an $(\epsilon, \delta)$-domain, but the upper and lower part of the hourglasses are and hence we can still use the ideas from \cite{Jones:acta}. 

\subsection{Proof of Theorem \ref{th:count}}
Now, we are ready to give a proof for Theorem \ref{th:count}. We divide it into two parts.
\begin{thm}\label{th:New1}
The domain $\boz \subset\rn$ defined in Section \ref{domain} is a $(W^{1, p}, W^{1, q})$-extension domain for all $1\leq q< q^\star<p\leq \fz$. 
\end{thm}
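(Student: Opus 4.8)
The plan is to verify that the Whitney-type extension operator $E_W$ from Definition \ref{de:Whitney} is bounded from $W^{1,p}(\boz)$ to $W^{1,q}(\boz_1)$ for $1\leq q<q^\star<p<\fz$, and then to handle the $p=\fz$ case separately (or by a limiting/Morrey argument, since $q^\star = \fz$ forces $q\geq n$, where one instead uses the $(\epsilon,\delta)$-nature of the hourglass halves together with Lemma \ref{Lem}-type reasoning). Since $\boz_1$ is a nice (uniform-type) domain, boundedness $W^{1,p}(\boz)\to W^{1,q}(\boz_1)$ gives the desired $(W^{1,p},W^{1,q})$-extension to $\rn$ by composing with a standard extension from $\boz_1$. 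By the density of $C^\fz(\overline\boz)\cap W^{1,p}(\boz)$ guaranteed by Lemma \ref{le:smooth} (one should check $\boz$ satisfies the segment condition, which it does by the hourglass geometry), it suffices to establish the norm estimate for smooth $u$.

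First I would estimate $\|\nabla E_W(u)\|_{L^q(\boz_1\setminus F)}$. On a Whitney cube $Q\in\mathcal W^2_k$ in the complement, using $\sum_Q\psi_Q\equiv 1$ one writes $\nabla E_W(u)(x) = \sum_{\tilde Q}(u_{\tilde Q^\star}-u_{Q^\star})\nabla\psi_{\tilde Q}(x)$ for $x\in Q$ (subtracting a constant is legitimate since the $\psi$'s sum to $1$), where the sum runs over the finitely many neighbors $\tilde Q$ of $Q$. Then $|\nabla\psi_{\tilde Q}|\lesssim l(Q)^{-1}$ by \eqref{q3}, so the local gradient is controlled by $l(Q)^{-1}\max_{\tilde Q}|u_{\tilde Q^\star}-u_{Q^\star}|$. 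The differences $|u_{\tilde Q^\star}-u_{Q^\star}|$ are estimated by chaining along $\tilde\Gamma(Q^\star,\tilde Q^\star)$ (for $\tilde Q\in\mathcal W_Q^1$, chain length $\leq M$) via a telescoping sum plus the Poincaré inequality on each cube of the chain and on pairwise unions of adjacent cubes: $|u_{S_i}-u_{S_{i+1}}|\lesssim l(S_i)^{1-n/p}\|\nabla u\|_{L^p(S_i\cup S_{i+1})}$ by Hölder after the ball Poincaré inequality. For $\tilde Q\in\mathcal W_Q^2$ (the ``long-chain'' case, which only occurs when $l(Q)$ is large compared to $l_k$, i.e. near the waist of the hourglass $k$) one uses instead that both $Q^\star$ and $\tilde Q^\star$ lie on the \emph{same} side of $P_k$ and can be connected through the upper or lower cone, which is an $(\epsilon,\delta)$-domain, so Jones-type chaining still applies with a uniform length bound. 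Raising to the $q$-th power, summing over cubes, and invoking the bounded-overlap estimates \eqref{eq6:3} and \eqref{eq:overlap} reduces everything to $\sum_{Q^\star} l(Q^\star)^{n - q + q(1-n/p)}\big(\fint_{C Q^\star}|\nabla u|^p\big)^{q/p}$; since $q<p$ one applies Hölder in the summation index with exponent $p/q$, and the geometric series converges precisely because the exponent $n - q + q(1 - n/p) = n(1-q/p) + q - qn/p + \dots$ works out favorably exactly under $q<q^\star<p$ — this is where the hypothesis enters quantitatively.

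Second, I would bound $\|E_W(u)\|_{L^q(\boz_1\setminus F)}$: on each $Q\in\mathcal W^2_k$, $|E_W(u)|\leq\max_{\tilde Q}|u_{\tilde Q^\star}|\leq |u_{Q_0^\star}| + (\text{sum of consecutive differences along a chain back to a fixed cube } Q_0^\star\subset\mathcal Q_0)$, and the differences are controlled as above while $|u_{Q_0^\star}|\lesssim \|u\|_{L^p(\boz)}$. Alternatively, since $\boz$ supports a global Poincaré inequality? — no, it need not, so I would instead dominate $\|E_W u\|_{L^q(\boz_1)}$ by $\|u\|_{L^q(\boz)} + (\diam\boz_1)\|\nabla E_W u\|_{L^q(\boz_1)} + \|E_W u - u_\boz\|_{L^q}$ and absorb via the already-established gradient bound, using $|\boz_1|<\fz$ and $q\leq p$ to pass from $L^p(\boz)$ to $L^q(\boz)$. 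Combining the two estimates yields $\|E_W u\|_{W^{1,q}(\boz_1)}\lesssim\|u\|_{W^{1,p}(\boz)}$, and composing with a Stein/Jones extension $\boz_1\to\rn$ finishes the proof. The main obstacle is the chaining estimate near the waist of each hourglass $\mathscr C_k$: the domain $\boz$ is genuinely not $(\epsilon,\delta)$ there, so one must carefully use the splitting of each cone into its $(\epsilon,\delta)$ upper and lower halves, the same-side-of-$P_k$ condition built into the reflected-cube construction, and the dichotomy $\mathcal W_Q = \mathcal W_Q^1 \cup \mathcal W_Q^2$ together with Lemma \ref{le:overlap}, to keep all chain lengths uniformly bounded and all overlaps controlled; verifying that the resulting exponent sums converge for \emph{all} $p>q^\star$ (uniformly, with the constant possibly depending on $p$) is the delicate bookkeeping.
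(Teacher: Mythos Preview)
Your overall strategy matches the paper's, but there is a genuine gap in the treatment of the ``bad'' class $\mathcal W_Q^2$, and this gap is exactly where the hypothesis $p>q^\star$ is needed.

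You assert that for $\tilde Q\in\mathcal W_Q^2$ the reflected cubes $Q^\star$ and $\tilde Q^\star$ lie on the \emph{same} side of $P_k$ and can therefore be joined by a uniformly bounded Jones-type chain inside the relevant half-cone. This is backwards. By the two observations preceding Lemma \ref{le:overlap}, a neighbor $\tilde Q$ fails to belong to $\mathcal W_Q^1$ only when $l(Q)>cl_k$ \emph{and} the centers of $Q^\star,\tilde Q^\star$ sit on \emph{opposite} sides of $P_k$. Any chain from $Q^\star$ to $\tilde Q^\star$ inside $\boz$ must then pass through the waist of the hourglass $\mathscr C_k$, where cubes have size $\sim l_k$ while $l(Q)$ may be of order $2^{-k}\gg l_k$; the chain length is therefore \emph{not} uniformly bounded, and your chaining argument breaks down. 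Consequently the gradient-only bound you write for these terms is not available, and your identification of where the exponent condition enters (in a sum of the form $\sum l(Q^\star)^{n-q+q(1-n/p)}$ coming from Poincar\'e on chains) is not the right one.

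The paper's device is to give up on chaining for $\tilde Q\in\mathcal W_Q^2$ altogether: one simply uses the triangle inequality $|u_{\tilde Q^\star}-u_{Q^\star}|\leq |u_{\tilde Q^\star}|+|u_{Q^\star}|$ and controls each average by H\"older, yielding a contribution of order $l(Q^\star)^{\,n-q-nq/p}\bigl(\int_{Q^\star}|u|^p\bigr)^{q/p}$. Summing over the relevant cubes, after counting how many $Q$ at each scale have nonempty $\mathcal W_Q^2$, produces $\sum_j j\,2^{-j(n-q-nq/p)}\|u\|_{L^p(\boz)}^q$, which converges precisely when $n-q-nq/p>0$, i.e.\ $p>q^\star$. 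This is the single place the hypothesis is used, and it involves $\|u\|_{L^p(\boz)}$ rather than $\|\nabla u\|_{L^p(\boz)}$---which is also why the construction later fails for $(L^{1,p},L^{1,q})$. Your $L^q$-estimate for $E_W(u)$ is also more complicated than needed: bounded overlap and Jensen give $\sum_Q|Q|\,|u_{Q^\star}|^q\lesssim\int_\boz|u|^q$ directly, without chaining back to a fixed cube. Finally, for $p=\fz$ the paper simply observes that $\boz$ is quasiconvex and hence already a $(W^{1,\fz},W^{1,\fz})$-extension domain; no separate Morrey-type argument is required.
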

\begin{proof}
It follows from the construction that $\Omega$ is quasiconvex and hence $(W^{1,\infty}, W^{1,\infty})$-extension domain \cite{HKT:JFA}. It is clear that $\boz_1$ is an $(\epsilon, \delta)$-domain for some $\epsilon, \delta>0$. By \cite{Jones:acta}, we have a bounded linear extension operator from $W^{1, q}(\boz_1)$ to $W^{1, q}(\rn)$ for every $1\leq q\leq\fz$. Hence, it suffices to construct a bounded extension operator from $W^{1, p}(\boz)$ to $W^{1, q}(\boz_1)$ when $q^\star<p<\fz$.  It is not difficult to check that $\boz$ satisfies the segment condition defined in Definition \ref{de:segment}. By Lemma \ref{le:smooth}, $C^\fz(F)\cap W^{1, p}(\boz)$ is dense in $W^{1, p}(\boz)$. To begin, we define a bounded linear extension operator $\widetilde{E_W}$ from $C^\fz(F)\cap W^{1, p}(\boz)$ to $W^{1, q}(\boz_1)$ and later extend it to $W^{1, p}(\boz)$. Given $u\in C^\fz(F)\cap W^{1, p}(\boz)$, we define a function $\widetilde{E_W}(u)$ on $\boz_1$ by setting 
\begin{equation}\label{eq:E}
\widetilde{E_W}(u)(x):=\begin{cases}
u(x), \ \ &\ {\rm if}\ x\in F\cap\boz_1,\\
\sum_{Q\in\bigcup_k\mathcal W^2_k}u_{Q^\star}\cdot\psi_Q(x), \ \ &\ {\rm if}\ x\in\boz_1\setminus F,
\end{cases}
\end{equation}
where
\[u_{Q^\star}:=\bint_{Q^\star}u(y)dy=\frac{1}{|Q^\star|}\int_{Q^\star}u(y)dy\]
is the integral average of the function $u$ over the reflected cube $Q^\star$ and $\{\psi_Q\}$ is the partition of unity associated to the Whitney decomposition of $\mathcal W(F^c)$.

Since $\psi_Q$ are smooth and, for every compact subset of $\boz_1\setminus F$, only finitely many $\psi_Q$ are non-zero on it, $\widetilde{E_W}(u)$ is smooth on $\Omega_1\setminus F$. Hence, it suffices to show that the Sobolev norm of $\widetilde{E_W}(u)$ is uniformly bounded from above by the Sobolev norm of $u$. To begin, we have 
\begin{equation}\label{eq0:1}
\int_{\boz_1}|\widetilde{E_W}(u)(x)|^qdx\leq\int_{\boz}|u(x)|^qdx+\int_{\bigcup_kC_k\setminus\mathscr C_k}|\widetilde{E_W}(u)(x)|^qdx.
\end{equation}
Since the volume of $\boz$ is finite, the H\"older inequality implies 
\begin{equation}\label{eq0:2}
\int_{\boz}|u(x)|^qdx\leq C\lf(\int_{\boz}|u(x)|^pdx\r)^{\frac{q}{p}}.
\end{equation}
Now
\[0\leq\psi_Q\leq 1\ {\rm for\ every}\ Q\in\bigcup_k\mathcal W^2_k\]
and for every $x\in C_k\setminus\mathscr C_k$, there are only finitely many $Q\in\mathcal W^2_k$ such that 
\[\psi_Q(x)\neq 0.\] 
Since
\[C_k\setminus\mathscr C_k\subset\bigcup_{Q\in\mathcal W_k^2} Q\]
for every $k\in\mathbb N$, we have 
\begin{equation}\label{eq:2}
\int_{\bigcup_k C_k\setminus\mathscr C_k}|\widetilde{E_W}(u)(x)|^qdx\leq C\sum_{Q\in\bigcup_k\mathcal W^2_k}\int_Q|u_{Q^\star}|^qdx.
\end{equation}
Combining (\ref{eq6:2}), (\ref{eq6:3}) and Hölder's inequality, we obtain
\begin{eqnarray}\label{eq:3}
\sum_{Q\in\bigcup_k\mathcal W^2_k}\int_Q|u_{Q^\star}|^qdx&=&\sum_{Q\in\bigcup_k\mathcal W^2_k}\int_Q\lf|\bint_{Q^\star}u(y)dy\r|^qdx\nonumber\\
&\leq&\sum_{Q\in\bigcup_k\mathcal W^2_k}\int_Q\bint_{Q^\star}|u(y)|^qdydx\nonumber\\
&\leq&C\sum_{Q\in\bigcup_k\mathcal W^2_k}\int_{Q^\star}|u(y)|^qdy\nonumber\\
&\leq&C\int_{\boz}|u(y)|^qdy\leq C\lf(\int_{\boz}|u(y)|^pdy\r)^{\frac{q}{p}}.
\end{eqnarray}
Using (\ref{eq:2}) and (\ref{eq:3}), we deduce that  
\begin{equation}\label{eq:4}
\int_{\bigcup_k C_k\setminus\mathscr C_k}|\widetilde{E_W}(u)(x)|^qdx\leq C\lf(\int_{\boz}|u(x)|^pdx\r)^{\frac{q}{p}}.
\end{equation}
By using inequalities (\ref{eq0:1}) and (\ref{eq:4}), we get that 
\begin{equation}\label{eq:5}
\lf(\int_{\boz_1}|\widetilde{E_W}(u)(x)|^qdx\r)^{\frac{1}{q}}\leq C\lf(\int_{\boz}|u(x)|^pdx\r)^{\frac{1}{p}}.
\end{equation} 

Similarly, we have 
\begin{equation}\label{eq:6}
\int_{\boz_1}|\nabla \widetilde{E_W}(u)(x)|^qdx\leq\int_{\boz}|\nabla u(x)|^qdx+\int_{\bigcup_k C_k\setminus\mathscr C_k}|\nabla \widetilde{E_W}(u)(x)|^qdx.
\end{equation}
Let $k\in\mathbb N$. For $x\in C_k\setminus\mathscr C_k$ , there are only finitely many $Q\in\mathcal W^2_k$ such that 
\[\psi_Q(x)\neq 0.\]
Hence, for $x\in\bigcup_{k} C_k\setminus\mathscr C_k$, we have
\[\nabla \widetilde{E_W}(u)(x)=\sum_{Q\in\bigcup_k\mathcal W^2_k}u_{Q^\star}\cdot\nabla\psi_Q(x).\]
Since 
\[\bigcup_k C_k\setminus\mathscr C_k\subset\bigcup_{Q\in\bigcup_{k}\mathcal W_k^1}Q,\]
we have 
\begin{eqnarray}\label{eq:8}
\int_{\bigcup_k C_k\setminus\mathscr C_k}|\nabla \widetilde{E_W}(u)(x)|^qdx&\leq&\int_{\underset{Q\in\bigcup_k\mathcal W^1_k}{\bigcup}Q}|\nabla \widetilde{E_W}(u)(x)|^qdx\\
&\leq&\sum_{Q\in\bigcup_k\mathcal W_k^1}\int_Q|\nabla \widetilde{E_W}(u)(x)|^qdx.\nonumber
\end{eqnarray}
For $Q\in\bigcup_k\mathcal W_k^1$, we have 
\begin{eqnarray}\label{eq:9}
\int_Q|\nabla \widetilde{E_W}(u)(x)|^qdx&=&\int_Q\lf|\nabla\sum_{\tilde Q\in\mathcal W_Q}u_{\tilde Q^\star}\psi_{\tilde Q}(x)\r|^qdx\\
&\leq&\int_Q\lf|\nabla\lf(\sum_{\tilde Q\in\mathcal W_Q}\lf(u_{\tilde Q^\star}-u_{Q^\star}\r)\psi_{\tilde Q}(x)\r)\r|^qdx\nonumber\\
&\leq&C\sum_{\tilde Q\in\mathcal W_Q}\int_Q\lf|\nabla\psi_{\tilde Q}(x)\r|^q\cdot\lf|u_{\tilde Q^\star}-u_{Q^\star}\r|^qdx\nonumber\\
&\leq&C\sum_{\tilde Q\in\mathcal W_Q^1}\int_Q\lf|\nabla\psi_{\tilde Q}(x)\r|^q\cdot\lf|u_{\tilde Q^\star}-u_{Q^\star}\r|^qdx\nonumber\\
& &+C\sum_{\tilde Q\in\mathcal W_Q^2}\int_Q\lf|\nabla\psi_{\tilde Q}(x)\r|^q\cdot\lf|u_{\tilde Q^\star}-u_{Q^\star}\r|^qdx,\nonumber
\end{eqnarray}
with a positive constant $C$ independent of $u$ and $Q$. By using \eqref{eq6:2} and the properties of our partition of unity, there exists a constant $C>1$ such that for every $x\in Q$ and each $\tilde Q\in\mathcal W_Q$, we have
\begin{equation}\label{eq:10}
\lf|\nabla \psi_{\tilde Q}(x)\r|\leq \frac{C}{l(Q)}.
\end{equation}

 Given $\tilde Q\in\mathcal W_Q^1$, there exists a chain $\{Q_i^{\star}\}_{i=1}^m$ of length at most $M$ which connects $Q^\star$ and $\tilde Q^\star$ and satisfies the properties in Lemma \ref{le:overlap}. The triangle inequality implies 
\begin{eqnarray}\label{eq:s1}
\lf|u_{Q_1^{\star}}-u_{Q_m^{\star}}\r|&\leq&\sum_{i=1}^{m-1}\lf|u_{Q_i^{\star}}-u_{Q_{i+1}^{\star}}\r|\\
&\leq&\sum_{i=1}^{m-1}\lf(\lf|u_{Q_i^{\star}}-u_{Q_i^{\star}\cup Q_{i+1}^{\star}}\r|+\lf|u_{Q_{i+1}^{\star}}-u_{Q_i^{\star}\cup Q_{i+1}^{\star}}\r|\r).\nonumber
\end{eqnarray}
Since there exists a constant $C>1$ such that for every $1\leq i\leq m-1$, we have
\[\max\lf\{|Q_i^{\star}|, |Q_{i+1}^{\star}|\r\}\leq \lf|Q_i^{\star}\cup Q_{i+1}^{\star}\r|\leq C\min\lf\{|Q_i^{\star}|, |Q_{i+1}^{\star}|\r\},\]
the triangle inequality and \cite[Lemma 2.2]{Jones:acta} give that
\begin{eqnarray}\label{eq:s2}
\lf|u_{Q_i^{\star}}-u_{Q_i^{\star}\cup Q_{i+1}^{\star}}\r|^q&\leq& C\bint_{Q_i^{\star}\cup Q_{i+1}^{\star}}\lf|u(x)-u_{Q_i^{\star}\cup Q_{i+1}^{\star}}\r|^qdx\\
    &\leq&Cl^q(Q_i^{\star})\bint_{Q_i^{\star}\cup Q_{i+1}^{\star}}|\nabla u(x)|^qdx.\nonumber
\end{eqnarray}
By Lemma \ref{le:overlap}, there exists a constant $C>1$ such that for all $Q_i^\star, Q_j^\star\in\big\{Q_k
^{\star}\big \}_{k=1}^m$, we have 
\begin{equation}\label{eq:s3}
\frac{1}{C}\leq\frac{l(Q_i^\star)}{l(Q_j^\star)}\leq C.
\end{equation}

Since $n\leq M$, by combining inequalities (\ref{eq:s1}) and (\ref{eq:s2}), we obtain 
\begin{equation}\label{eq:s4}
\lf|u_{\tilde{Q}^\star}-u_{Q^\star}\r|^q\leq C\frac{l^q(Q)}{|Q|}\int_{\widetilde\Gamma(Q^\star, \tilde Q^\star)}|\nabla u(x)|^qdx.
\end{equation}
Hence, for $\tilde Q\in\mathcal W_Q^1$, we have 
\begin{equation}\label{eq:s5}
\int_Q|\nabla\psi_{\tilde Q}(x)|^q\cdot\lf|u_{\tilde Q^\star}-u_{Q^\star}\r|^qdx\leq C\int_{\widetilde\Gamma(Q^\star, \tilde Q^\star)}|\nabla u(x)|^qdx.
\end{equation}

If $\tilde Q\in\mathcal W_Q^2$, then 
$Q^\star$ and $\tilde Q^\star$ are at different sides of the hyperplane $P_k$. Then the triangle inequality, (\ref{eq:10}) and the Hölder inequality imply that 
\begin{eqnarray}\label{eq:s6}
\int_Q\lf|\nabla\psi_{\tilde Q}(x)\r|^q\lf|u_{\tilde Q^\star}-u_{Q^\star}\r|^qdx&\leq&C\int_Q\lf|\nabla\psi_{\tilde Q}(x)\r|^q\lf|u_{\tilde Q^\star}\r|^qdx\\
& &+C\int_Q\lf|\nabla\psi_{\tilde Q}(x)\r|^q|u_{Q^\star}|^qdx\nonumber\\
&\leq&Cl^{n-q}(\tilde Q^\star)\bint_{\tilde Q^\star}\lf|u(x)\r|^qdx\nonumber\\
& &+Cl^{n-q}(Q^\star)\bint_{Q^\star}|u(x)|^qdx\nonumber \\
&\leq&Cl^{n-q-\frac{nq}{p}}(\tilde Q^\star)\lf(\int_{\tilde Q^\star}|u(x)|^pdx\r)^{\frac{q}{p}}\nonumber\\
& &+Cl^{n-q-\frac{nq}{p}}(Q^\star)\lf(\int_{Q^\star}|u(x)|^pdx\r)^{\frac{q}{p}}.\nonumber                                                                                
\end{eqnarray} 
By combining (\ref{eq:8}), (\ref{eq:9}), (\ref{eq:s5}) and (\ref{eq:s6}), we conclude that

\begin{eqnarray}\label{eq:s7}
\int_{\bigcup_k C_k\setminus\mathscr C_k}\lf|\nabla\widetilde{E_W}(u)(x)\r|^qdx&\leq& C\sum_{Q\in\bigcup_k\mathcal W_k^1}\sum_{\tilde Q\in\mathcal W_Q^1}\int_{\tilde{\Gamma}(Q^\star, \tilde Q^\star)}|\nabla u(x)|^qdx,\\
& &+C\sum_{Q\in\bigcup_k\mathcal W_k^1}\sum_{\tilde Q\in\mathcal W_Q^2}l^{n-q-\frac{nq}{p}}(Q^\star)\lf(\int_{Q^\star}|u(x)|^pdx\r)^{\frac{q}{p}}\nonumber\\
& &+C\sum_{Q\in\bigcup_k\mathcal W_k^1}\sum_{\tilde Q\in\mathcal W_Q^2}l^{n-q-\frac{nq}{p}}(\tilde Q^\star)\lf(\int_{\tilde Q^\star}|u(x)|^pdx\r)^{\frac{q}{p}}.\nonumber
\end{eqnarray}
 For each $Q\in\bigcup_k\mathcal W_k^1$, the cardinality of $\mathcal W_Q^1$ is uniformly bounded by a constant $C$. By combining (\ref{eq:overlap}) and the H\"older inequality, we obtain
\begin{eqnarray}\label{eq:s8}
\sum_{Q\in\bigcup_k\mathcal W_k^1}\sum_{\tilde Q\in\mathcal W_Q^1}\int_{\tilde{\Gamma}(Q^\star, \tilde Q^\star)}|\nabla u(x)|^qdx&\leq& C\int_{\boz}|\nabla u(x)|^qdx\\
&\leq& C\lf(\int_{\boz}|\nabla u(x)|^pdx\r)^{\frac{q}{p}}.\nonumber
\end{eqnarray}

Given $j\in\mathbb N$, we define 
\begin{equation}\label{eq:subclass}
\mathcal W_j(\boz):=\lf\{Q\in\mathcal W(\boz): 2^{-j-1}\leq l(Q)<2^{-j}\r\}
\end{equation}
and 
\[\widetilde{\mathcal W}_j(F^c):=\lf\{Q\in\mathcal W(F^c):Q^\star\in\mathcal W_j(\boz)\r\}.\]
Let $j\in\mathbb N$ and $k\leq j$. Let $Q\in\mathcal W_k^1$ be such that $Q^\star\in\mathcal W_j(\boz)$ and $\mathcal W_Q^2$ is non-empty. Then, for such a $Q$ we have 
\[l(Q)>cl_k\]
where $0<c<1$ is independent of $j,k$. By the geometry of $\boz$, there exists $C>1$ such that for each $Q\in\mathcal W_k^1$ with non-empty $\mathcal W_Q^2$, we have 
\[\dist(Q, P_k)<Cl(Q)\ {\rm and}\ \dist(Q, \boz)<Cl(Q).\]
Hence, for every $j\in\mathbb N$ and each $k\leq j$, the cardinalities of the sets $\widetilde{\mathcal W}_j(F^c)\cap\mathcal W_k^1$ and $\mathcal W_j(\boz)$ are uniformly bounded from above by a constant $C$. Hence, the cardinality of the set $\widetilde{\mathcal W}_j(F^c)$ is bounded from above by $Cj$. For each $Q\in\bigcup_k\mathcal W_k^1$, the cardinality of the set $\mathcal W_Q^2$ is uniformly bounded from above. Since $1\leq q<n$ and $nq/(n-q)<p<\fz$, by (\ref{eq:s6}), we have 
\begin{multline}\label{eq:s9}
\sum_{Q\in\bigcup_k\mathcal W_k^1}\sum_{\tilde Q\in\mathcal W_Q^2}l^{n-q-\frac{nq}{p}}(Q^\star)\lf(\int_{Q^\star}|u(x)|^pdx\r)^{\frac{q}{p}}\\
\leq C\sum_{j=1}^\fz\sum_{Q^\star\in{\mathcal W}_j(\boz)}l^{n-q-\frac{nq}{p}}(Q^\star)\lf(\int_{Q^\star}|u(x)|^pdx\r)^{\frac{q}{p}}\\
\leq C\sum_{j=1}^\fz j2^{-j\lf(n-q-\frac{nq}{p}\r)}\lf(\int_{\boz}|u(x)|^pdx\r)^{\frac{q}{p}}\\
\leq C\lf(\int_{\boz}|u(x)|^pdx\r)^{\frac{q}{p}}.
\end{multline}
By combining (\ref{eq:s7}), (\ref{eq:s8}) and (\ref{eq:s9}), we obtain 
\begin{equation}\label{eq:s10}
\int_{\bigcup_k C_k\setminus\mathscr C_k}|\nabla\widetilde{E_W}(u)(x)|^qdx\leq C\lf(\int_{\boz}|u(x)|^p+|\nabla u(x)|^pdx\r)^{\frac{q}{p}}.
\end{equation}
Using Hölder's inequality and inequalities (\ref{eq:6}) and (\ref{eq:s10}), we conclude that 
\begin{equation}\label{eq:s11}
\int_{\boz_1}|\nabla\widetilde{E_W}(u)(x)|^qdx\leq C\lf(\int_{\boz}|u(x)|^p+|\nabla u(x)|^pdx\r)^{\frac{q}{p}}.
\end{equation}
By combining (\ref{eq:5}) and (\ref{eq:s11}), we obtain that the extension operator constructed in (\ref{eq:E}) is a bounded linear extension operator from $C^\fz(F)\cap W^{1, p}(\boz)$ to $W^{1, q}(\boz_1)$, which means that for every $u\in C^\fz(F)\cap W^{1, p}(\boz)$, we have $\widetilde{E_W}(u)\in W^{1, q}(\boz_1)$ with 
\begin{equation}\label{eq:s12}
\|\widetilde{E_W}(u)\|_{W^{1, q}(\boz_1)}\leq C\|u\|_{W^{1, p}(\boz)},
\end{equation}
where $C>0$ is independent of $u$. Since $C^{\fz}(F)\cap W^{1, p}(\boz)$ is dense in $W^{1, p}(\boz)$, for every $u\in W^{1, p}(\boz)$, there exists a Cauchy sequence $\{u_m\}\subset C^\fz(F)\cap W^{1, p}(\boz)$ which converges to $u$ in $W^{1, p}(\boz)$. Furthermore, there exists a subsequence of $\{u_m\}$ which converges to $u$ almost everywhere in $\boz$. To simplify the notation, we still refer to this subsequence by $\{u_m\}$. By (\ref{eq:s12}), $\{\widetilde{E_W}(u_m)\}$ is a Cauchy sequence in the Sobolev space $W^{1, q}(\boz_1)$ and hence converges to some function $v\in W^{1, q}(\boz_1)$. Furthermore, there exists a subsequence of $\{\widetilde{E_W}(u_m)\}$ which converges to $v$ almost everywhere in $\boz_1$. On the other hand, by the definition of $E(u_m)$ and $E(u)$, we have 
\[\lim_{m\to\fz}\widetilde{E_W}(u_m)(x)=E_W(u)(x)\]
for almost every $x\in\boz_1$. Hence $v(x)=E_W(u)(x)$ almost everywhere. This implies that $E_W(u)\in W^{1, q}(\boz_1)$ with 
\begin{eqnarray}\label{eq:213}
\|E_W(u)\|_{W^{1, q}(\boz_1)}&=&\|v\|_{W^{1, q}(\boz_1)}=\lim_{m\to\fz}\|\widetilde{E_W}(u_m)\|_{W^{1, q}(\boz_1)}\\
&\leq&C\lim_{m\to\fz}\|u\|_{W^{1, p}(\boz)}\leq C\|u\|_{W^{1,p}(\boz)},\nonumber
\end{eqnarray}
where $C>0$ is independent of $u$. Hence, $E_W$ defined in (\ref{eq:Whitney}) is a bounded linear extension operator from $W^{1,p}(\boz)$ to $W^{1, q}(\boz_1)$. The fact that $\boz_1$ is a $(W^{1, q}, W^{1, q})$-extension domain implies $\boz$ is a $(W^{1, p}, W^{1, q})$-extension domain for all $1\leq q< q^\star<p<\fz$. 
\end{proof}
In the next theorem, we will show that $\Omega$ is not an $(L^{1,p}, L^{1,q})$-extension domain.

\begin{thm}\label{th:New2}
Let $1 \leq  q <p\leq n$. Then $\boz\subset\rn$, defined in Section \ref{domain}, is not an $(L^{1, p}, L^{1,q})$-extension domain.
\end{thm}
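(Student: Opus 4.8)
The plan is to rule out any bounded extension operator by producing, for a fixed pair $1\leq q<p\leq n$, a sequence $u_k\in L^{1,p}(\boz)$ with $\|\nabla u_k\|_{L^p(\boz)}$ bounded independently of $k$, such that every $v\in L^{1,q}(\rn)$ with $v|_\boz\equiv u_k$ has $\|\nabla v\|_{L^q(\rn)}\to\infty$. I take $u_k$ to be a \emph{step function} along the $k$-th hourglass: $u_k\equiv a_k$ on $\mathscr C_k^-$, $u_k\equiv 0$ on $\boz\setminus\mathscr C_k$ and on $\mathscr C_k^+\cap\{x_n\geq -2^{-k-2}\}$, and on the thin slice $\mathscr C_k^+\cap\{-2^{-k-1}\leq x_n\leq -2^{-k-2}\}$ let $u_k$ depend only on $x_n$, dropping from $a_k$ at the waist to $0$ at height $-2^{-k-2}$ along the $p$-capacity minimizing profile. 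Denoting by $r_k^+(x_n)$ the radius of $\mathscr C_k^+$ at height $x_n$, one reads off from the defining formula that $r_k^+(x_n)\asymp l_k+(x_n+2^{-k-1})$ on this slice, so the optimal profile satisfies $|\nabla u_k|\asymp c_k\,r_k^+(x_n)^{-(n-1)/(p-1)}$ there, with $c_k>0$ normalized so that the total drop equals $a_k$. A one-dimensional computation then yields
\[
\|\nabla u_k\|_{L^p(\boz)}^p=\|\nabla u_k\|_{L^p(\mathscr C_k^+)}^p\;\asymp\;
\begin{cases}
a_k^{\,p}\,l_k^{\,n-p}, & \text{if }p<n,\\
a_k^{\,n}\,\big(\log(2^{-k-2}/l_k)\big)^{-(n-1)}, & \text{if }p=n,
\end{cases}
\]
so choosing $a_k:=l_k^{-(n-p)/p}$ when $p<n$, and $a_k:=\big(\log(2^{-k-2}/l_k)\big)^{(n-1)/n}$ when $p=n$, bounds $\|\nabla u_k\|_{L^p(\boz)}$ uniformly while forcing $a_k\to\infty$ as $l_k\to0$. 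Since $u_k$ is Lipschitz on $\boz$, it lies in $L^{1,p}(\boz)$.

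For the lower bound on an extension, I first record the relevant geometry. The wide bottom of $\mathscr C_k^-$ (near $x_n=-2^{-k}$) contains a ball $B_k^-$ of radius comparable to $2^{-k}$; the wide top of $\mathscr C_k^+\cap\{x_n\geq -2^{-k-2}\}$ (near $x_n=0$) contains a ball $B_k^+$ of radius comparable to $2^{-k}$; and both sit inside the cylinder $C_k$, hence inside a ball $\widehat B_k$ of radius comparable to $2^{-k}$. Let $v\in L^{1,q}(\rn)$ extend $u_k$. Because $L^{1,q}(\rn)\subset W^{1,q}_{\mathrm{loc}}(\rn)$, the Poincar\'e inequality holds on $\widehat B_k$; since $v\equiv a_k$ on $B_k^-$ and $v\equiv 0$ on $B_k^+$, the averages of $v$ over $B_k^-$ and over $B_k^+$ differ by exactly $a_k$, and estimating each against the average over $\widehat B_k$ (using $|B_k^{\pm}|\asymp|\widehat B_k|$ and $\diam(\widehat B_k)\asymp 2^{-k}$) gives $a_k\leq C\,2^{-k}\big(|\widehat B_k|^{-1}\int_{\widehat B_k}|\nabla v|^q\big)^{1/q}$, hence
\[
\int_{\rn}|\nabla v(x)|^q\,dx\;\geq\;\int_{\widehat B_k}|\nabla v(x)|^q\,dx\;\geq\;c\,a_k^{\,q}\,2^{-k(n-q)}
\]
for constants $c,C>0$ depending only on $n$ and $q$.

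Combining the two estimates concludes the proof. If $\boz$ were an $(L^{1,p},L^{1,q})$-extension domain via an operator $E$ of norm $C_0$, then for $v_k:=E(u_k)$ we would have $\|\nabla v_k\|_{L^q(\rn)}\leq C_0\|\nabla u_k\|_{L^p(\boz)}\leq C$ with $C$ independent of $k$, and the last display would force $a_k\leq C\,2^{k(n-q)/q}$ for all $k$. This is contradicted once $l_k$ decays fast enough; the construction in Section~\ref{domain} only demands $0<l_k<2^{-k-3}$, so we are free to fix $l_k:=\exp(-2^{k^2})$. Then $a_k=\exp\!\big(2^{k^2}(n-p)/p\big)$ when $p<n$, and $a_k\asymp 2^{k^2(n-1)/n}$ when $p=n$; in both cases $a_k$ eventually exceeds $C\,2^{k(n-q)/q}$, the desired contradiction. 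Hence $\boz$ is not an $(L^{1,p},L^{1,q})$-extension domain whenever $1\leq q<p\leq n$.

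The main point is the upper bound for $\|\nabla u_k\|_{L^p(\boz)}$: one must check that the cone $\mathscr C_k^+$ acts as a resistor whose $p$-capacity between waist and top is governed by $\int (l_k+\sigma)^{-(n-1)/(p-1)}\,d\sigma$, and in particular that at the endpoint $p=n$ this integral is only of size $\log(1/l_k)$, which is exactly why the borderline case needs a doubly exponentially small $l_k$ (for $p<n$ any super-geometrically decaying $l_k$ suffices). The only other step requiring attention — routine from the explicit formulas for $\mathscr C_k^{\pm}$ and $C_k$ — is the extraction of the balls $B_k^{-},B_k^{+}\subset\widehat B_k$ with all three radii comparable to $2^{-k}$; this is where the hourglass shape, as opposed to a single cusp, is essential.
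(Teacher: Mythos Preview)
Your argument is correct and follows the same strategy as the paper's: build a test function at the $k$-th hourglass whose $L^{1,p}$-seminorm is controlled by the small $p$-capacity of the waist, then invoke a Poincar\'e-type inequality on a ball of scale $2^{-k}$ to force a large $L^{1,q}$-seminorm on any extension; the differences are purely technical (you use a vertical $x_n$-profile and the plain $(q,q)$-Poincar\'e inequality, whereas the paper uses a radial logarithmic cutoff near the waist and the Sobolev--Poincar\'e inequality, and your universal choice of $l_k$ is arguably cleaner than the paper's $q^\star$-dependent one). One triviality: $\exp(-2^{1})=e^{-2}>2^{-4}$, so $l_k=\exp(-2^{k^2})$ violates the constraint $l_k<2^{-k-3}$ at $k=1$; take instead, e.g., $l_k=2^{-k-4}\exp(-2^{k^2})$, and the rest of your argument goes through verbatim.
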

\begin{proof}

 To prove that $\boz$ is not an $(L^{1, p}, L^{1, q})$-extension domain, it suffices to show that, given $M<\infty$, there is $u \in L^{1, p}(\boz)$ whose every extension satisfies
 \begin{equation}
     \|E(u)\|_{L^{1,q}(\mathbb{R}^n)}\geq M\|u\|_{L^{1,p}(\Omega)}.
 \end{equation}

Let $k\in\mathbb N$. We define
\begin{eqnarray*}
a_k&:=& \frac{-2^{-k-1}}{1-l_k2^{(k+1)}},\\
  B^{n}_{k} &:= &\{ x\in \mathbb{R}^n: |x -(z_k ,a_k)| \leq 2l_{k}\}, \\
  \tilde{B}^{n}_{k} &:= &\{ x\in \mathbb{R}^n: |x -(z_k ,a_k)| \leq 2l_{k}+ 2^{-2 k}\}, \\
  \widetilde{\mathscr C}_{k}^{+}&:=& \lf\{x\in\rn: 0\leq|\check x-z_k|\leq\lf(1-l_{k}2^{(k+1)}\r)x_n+2^{-k-1}, a_k\leq x_n\leq 0\r\},
\end{eqnarray*}
where $0<l_k<2^{-3k-1}$ is sufficiently small and will be fixed later. 
%We define $u$ on $\boz$ by setting
%\begin{equation}
%    u(x):= \begin{cases}
%        1, \ \ &{\rm if}\ \ x\in \mathcal Q_0,\\
%        \tilde{u}_k(x), \ \ &{\rm if}\ \ x\in \mathscr{C}_{k},
%    \end{cases}
%\end{equation}
 We define a function $\tilde{u}_k$ on $\mathscr C_{k}$ by setting
\begin{equation}\label{eq:testu}
\tilde{u}_k (x):=\begin{cases}
1,\ \ &{\rm if}\ \ x\in {\mathscr C}_k^+\cap \Omega\setminus ( \tilde{B}^{n}_{k} \cap \widetilde{ \mathscr C}_{k}^{+} ),\\
1- \frac{\ln \left(\frac{2l_k+ 2^{-2k}}{\|x\|}\right)}{\ln \left(\frac{2l_k+ 2^{- 2k}}{2l_k}\right)},\ \ &{\rm if}\ \ x\in ( \tilde{B}^{n}_{k}\cap \widetilde{ \mathscr C}_{k}^{+})\cap \Omega\setminus (B^{n}_{k}\cap \widetilde{ \mathscr C}_{k}^{+}),\\
0, \ \ &{}\ \text{elsewhere}.
\end{cases}
\end{equation}
And we define $u_k$ on $\Omega$ by setting
\begin{equation}
    u_k(x) :=\begin{cases}
        \tilde{u}_k(x), \ \ &{\rm if} \ \ x \in \mathscr{C}_k\cap \Omega, \\
        1, \ \ &{} \text{elsewhere}
    \end{cases}
\end{equation}
According to $p$ equals to $n$ or not, we divide the following argument into two cases.

\textbf{Case $1(p=n)$:} In this case, we have
\begin{eqnarray} \label{n-estimate}
        \int_{\Omega}|\nabla u_k(x)|^n dx &=& \int_{((\tilde{B}^{n}_{k}\cap  \widetilde{\mathscr C}_{k}^{+})\setminus B_k^{n})\cap \Omega} \frac{1}{\left[\ln \left(\frac{l_k+ 2^{- 2k-1}}{l_k}\right)\right]^n}\frac{1}{\|x\|^n}dx,\\ \nonumber
        &\leq &  \frac{\omega_{n-1}}{\left[\ln \left(1+\frac{2^{- 2k-1}}{l_k}\right)\right]^{n-1}},\\ \nonumber
        &\leq& C(n) \left[\ln \left(1+ \frac{ 2^{-2k-1}}{l_k}\right)\right]^{1-n}.\\ \nonumber
\end{eqnarray}
Hence $u_k \in L^{1,n}(\Omega).$

%By using the triangle inequality and \eqref{n-estimate}, we conclude that
%\begin{eqnarray*}
 %   \|u\|_{L^{1,n}(\Omega)}^n\leq \sum_{k=1}^{\infty}\int_{\mathscr C_k}|\nabla u_k(x)|^n dx \leq C(n)\sum_{k=1}^{\infty} \left[\ln \left(1+ \frac{ 2^{-2k-1}}{l_k}\right)\right]^{1-n}< \infty,
%\end{eqnarray*}
%which means that $u\in L^{1,n}(\Omega).$

Let $B_k$ be ball with radius $\sqrt{2}\cdot2^{-k}$ and which is centered at $x_k= (z_k, -2^{-k-1}).$ Suppose that $E(u_k)\in L^{1,q}(\mathbb{R}^n)$ is an extension of $u_k$ with $q<n$. The Sobolev-Poincar\'e inequality gives
\begin{eqnarray}\label{tag}
    \|E(u_k)\|_{L^{1,q}(B_k)}&=& \left(\int_{B_k}|\nabla E(u_k)(x)|^qdx\right)^{\frac{1}{q}},\\
    &\geq& C(n)\left(\int_{B_k }| E(u_k)(x)- (E(u_k))_{B_k}|^{q^*}dx\right)^{\frac{1}{q^*}}.
\end{eqnarray}
Let $A_1=((B^{n}_{k} \cap  \widetilde{\mathscr C}_{k}^{+})\cup \mathscr C_{k}^{-})\cap \Omega$ and $A_2= \mathscr C_{k}^{+}\cap \Omega\setminus (\tilde B^{n}_{{k}}\cap \widetilde{\mathscr C}_{k}^{+})  $. Then
\begin{eqnarray*}
        \int_{B_k}| E(u_k)(x)- (E(u_k))_{A_1}|^{q^*}dx &\leq& C \int_{B_k}| E(u_k)(x)- {(E(u_k))}_{B_k}|^{q^*}dx\\
                                                        & &+C\int_{B_k}| (E(u_k))_{B_k}- (E(u_k))_{A_1}
        |^{q^*}dx.
\end{eqnarray*}
Furthermore
\begin{eqnarray*}
    |(E(u_k))_{B_k}- (E(u_k))_{A_1}
        |^{q^*}&\leq& \frac{1}{|A_1|}\int_{B_k}| E(u_k)(x)- (E(u_k))_{B_k}
        |^{q^*}dx.
\end{eqnarray*}
Hence, we have
\begin{equation}\label{est}
    \int_{B_k}| E(u_k)(x)- (E(u_k))_{A_1}|^{q^*}dx \leq C\left(1+ \frac{|B_k|}{|A_1|}\right) \int_{B_k}| E(u_k)(x)- {(E(u_k))}_{B_k}|^{q^*}dx.
\end{equation}
By combining \eqref{tag} and (\ref{est}), we conclude that
\begin{eqnarray}\label{norm estimaatti }
    \|E(u_k)\|_{L^{1,q}(B_k)}&\geq& C\left(\int_{B_k}| E(u_k)(x)- (E(u_k))_{A_1}|^{q^*}dx\right)^{\frac{1}{q^{*}}},\\ \nonumber
    &\geq& C\left(\int_{A_2}| E(u_k)(x)- (E(u_k))_{A_1}|^{q^*}dx\right)^{\frac{1}{q^{*}}},\\ \nonumber
%    &\geq& C\left(\int_{A_2} dx\right)^{\frac{1}{q^{*}}},\\ \nonumber
    &\geq& C|A_2|^{\frac{1}{q^{*}}},\\ \nonumber
    &\geq& C(n)2^{\frac{-(k+2)n}{q^{*}}}.
\end{eqnarray}
%Claim: 
%\begin{equation}
%    \lim_{k\to \infty} \frac{\|E(u_k)\|_{L^{1,q}(B_k)}}{\|u_k\|_{L^{1,n}(\mathscr{C}_k)}}=\infty.
%\end{equation}
%Note that $\|u_k\|_{L^{1,n}(\mathscr{C}_k)}=\|\tilde{u}_k\|_{L^{1,n}(\Omega)}$.
By using \eqref{n-estimate} and \eqref{norm estimaatti } and choosing
\begin{equation}
    l_k\leq \frac{1}{2^{2k+1}\left(\exp{(2^{(\frac{(k+2)n}{q^\star}+k)\frac{n}{n-1}})}\right)},
\end{equation}
we obtain
\begin{equation}
       \frac{\|E(u_k)\|_{L^{1,q}(B_k)}}{\|u_k\|_{L^{1,n}(\Omega)}}\geq C(n)\left(\frac{2^{\frac{-(k+2)n}{q^{*}}}}{ \left[\ln \left(1+ \frac{ 2^{-2k}}{l_k}\right)\right]^{\frac{1-n}{n}}}\right)\geq C(n)2^k.
\end{equation}
In conclusion, $$\|Eu_k\|_{L^{1,q}(\mathbb{R}^n)}\geq M\|u_k\|_{L^{1,p}(\Omega)},$$ when $k$ is sufficiently large.

%Then
%\begin{equation}\label{claimt}
 %   \frac{\|E(u_k)\|_{L^{1,q}(B_k)}}{\|u_k\|_{L^{1,n}(\mathscr{C}_k)}}\geq C(n) 2^k.
%\end{equation}
%Hence claim follows as $k\to \infty$.\\
%By using \eqref{claimt}, we conclude that for $1\leq q<n$, there is no extension $E(u)\in L^{1,q}(\mathbb{R}^n)$.

\textbf{Case $2(p<n)$:} We use Hölder's inequality to estimate
\begin{eqnarray*}
     \int_{\boz}|\nabla u_k(x)|^pdx&=&\int_{\mathscr C_{k}\cap \Omega}|\nabla u_k(x)|^pdx\\
%     &\leq& \left(\int_{B^{n-1}_{\tilde{k}}\cap  \widetilde{\mathscr C}_{k}^{+}}|\nabla u_k(x)|^ndx\right)^{\frac{p}{n}}\left(\int_{{B^{n-1}_{\tilde{k}}}\cap  \widetilde{\mathscr C}_{k}^{+}}1 dx\right)^{\frac{n-p}{n}}\\
     &\leq& \left(\int_{(\tilde{B}^{n}_{k}\cap  \widetilde{\mathscr C}_{k}^{+})\cap \Omega}|\nabla u_k(x)|^ndx\right)^{\frac{p}{n}}|({\tilde{B}^{n}_{k}\cap  \widetilde{\mathscr C}_{k}^{+}\cap \Omega})|^{\frac{n-p}{n}}.
\end{eqnarray*}
Hence, by using (\ref{n-estimate}), we have
\begin{equation}\label{5.34}
    \int_{\boz}|\nabla u_k(x)|^pdx \leq  C(n) \left[\ln \left(1+ \frac{ 2^{- 2k-1}}{l_k
    }\right)\right]^{\frac{p}{n}-1} |({\tilde{B}^{n}_{k}\cap  \widetilde{\mathscr C}_{k}^{+}\cap \Omega})|^{1-\frac{p}{n}}.
\end{equation}
%Hence,
%\begin{eqnarray*}
%    \|u\|_{L^{1,p}(\Omega)}^p\leq \sum_{k=1}^{\infty}\int_{\mathscr C_{k}}|\nabla u_k(x)|^pdx &\leq&  C(n)  \sum_{k=1}^{\infty} \left[\ln \left(1+ \frac{ 2^{- 2k-1}}{l_k
%    }\right)\right]^{\frac{p}{n}-1}   \sum_{k=1}^{\infty}|({B^{n-1}_{\tilde{k}}\cap  \widetilde{\mathscr C}_{k}^{+}})|^{1-\frac{p}{n}}.\\
%    &\leq& C(n)   \sum_{k=1}^{\infty}2^{-k}.
%\end{eqnarray*}
Similarly to the case above, Let $E(u_k)$ be an extension of $u_k$.
Then by using (\ref{norm estimaatti }) and (\ref{5.34}) and choosing
\begin{equation}
  l_k\leq \frac{1}{2^{2k}\left(\exp{(2^{(\frac{(k+2)n}{q^\star}+k)\frac{np}{n-p})}})\right)},
\end{equation}
we obtain that for $1\leq q< p<n$, we have
\begin{equation}
    \frac{\|E(u_k)\|_{L^{1,q}(B_k)}}{\|u_k\|_{L^{1,p}(\Omega)}}\geq C(n) 2^k.
\end{equation}
Hence, again $\|Eu_k\|_{L^{1,q}(\mathbb{R}^n)}\geq M \|u_k\|_{L^{1,p}(\Omega)}$ when $k$ is sufficiently large.
\end{proof}

%\subsection{A modified extension domain}

%In this subsection, for $n\geq 3$, $1\leq q<\frac{n}{2}$ and $nq/(n-q)<p<n$, we construct a Sobolev $(p, q)$-extension domain which is not a homogeneous Sobolev $(p, q)$-extension domain. It is a modified version of the domain $\boz_{p, q}$ defined in (\ref{eq:boz}).
 
%\begin{thm}\label{re3.1}
%For $n\geq 3$, $1\leq q<\frac{n}{2}$ and $nq/(n-q)<p<n$, there exists a Sobolev $(p, q)$-extension domain which is not a homogeneous Sobolev $(p, q)$-extension domain.
%\end{thm}

\end{document}